
\documentclass[12pt]{article}
\usepackage{graphicx}
\usepackage{amssymb, amsthm, amsmath, amsfonts, amsrefs}
\usepackage{mathrsfs,microtype}
\usepackage{epstopdf, color}
\usepackage{tikz}
\usepackage{hyperref}
\usetikzlibrary{arrows,decorations.pathmorphing,decorations.footprints,fadings,calc,trees,mindmap,shadows,decorations.text,patterns,positioning,shapes,matrix,fit,through}
\usepackage[margin=1.3in]{geometry}

\theoremstyle{plain}
\newtheorem{theorem}{Theorem}[section]
\newtheorem{lemma}[theorem]{Lemma}
\newtheorem{corollary}[theorem]{Corollary}

\newtheorem{question}[theorem]{Question}

\theoremstyle{definition}
\newtheorem{definition}[theorem]{Definition}

\theoremstyle{remark}
\newtheorem*{remark}{Remark}
\theoremstyle{theorem}
\newtheorem{conjecture}[theorem]{Conjecture}

\newcommand{\calG}{\mathcal{G}}

\title{Counting independent sets of a fixed size in graphs with a given minimum degree}
\author{John Engbers \and David Galvin}
\date{\today\thanks{$\{$jengbers, dgalvin1$\}$@nd.edu; Department of Mathematics,
University of Notre Dame, Notre Dame IN 46556. Galvin in part supported by National Security Agency grant H98230-10-1-0364.}}

\begin{document}

\maketitle

\begin{abstract}
Galvin showed that for all fixed $\delta$ and sufficiently large $n$, the $n$-vertex graph with minimum degree $\delta$ that admits the most independent sets is the complete bipartite graph $K_{\delta,n-\delta}$. He conjectured that except perhaps for some small values of $t$, the same graph yields the maximum count of independent sets of size $t$ for each possible $t$. Evidence for this conjecture was recently provided by Alexander, Cutler, and Mink, who showed that for all triples $(n,\delta, t)$ with $t\geq 3$, no $n$-vertex {\em bipartite} graph with minimum degree $\delta$ admits more independent sets of size $t$ than $K_{\delta,n-\delta}$.

Here we make further progress. We show that for all triples $(n,\delta,t)$ with $\delta \leq 3$ and $t\geq 3$, no $n$-vertex graph with minimum degree $\delta$ admits more independent sets of size $t$ than $K_{\delta,n-\delta}$, and we obtain the same conclusion for $\delta > 3$ and $t \geq 2\delta +1$. Our proofs lead us naturally to the study of an interesting family of critical graphs, namely those of minimum degree $\delta$ whose minimum degree drops on deletion of an edge or a vertex.
\end{abstract}

\section{Introduction and statement of results}

An independent set (a.k.a. stable set) in a graph is a set of vertices spanning no edges. For a simple, loopless finite graph $G = (V,E)$, denote by $i(G)$ the number of independent sets in $G$. In \cite{ProdingerTichy} this quantity is referred to as the {\em Fibonacci number} of $G$, motivated by the fact that for the path graph $P_n$ its value is a Fibonacci number. It has also been studied in the field of molecular chemistry, where it is referred to as the {\em Merrifield-Simmons} index of $G$ \cite{MerrifieldSimmons}.

A natural extremal enumerative question is the following: as $G$ ranges over some family $\calG$, what is the maximum value attained by $i(G)$, and which graphs achieve this maximum?
This question has been addressed for numerous families. Prodinger and Tichy \cite{ProdingerTichy} considered the family of $n$-vertex trees, and showed that the maximum is uniquely attained by the star $K_{1,n-1}$. Granville, motivated by a question in combinatorial group theory, raised the question for the family of $n$-vertex, $d$-regular graphs (see \cite{Alon} for more details). An approximate answer -- $i(G) \leq 2^{n/2(1+o(1))}$ for all such $G$, where $o(1) \rightarrow 0$ as $d \rightarrow \infty$ -- was given by Alon in \cite{Alon}, and he speculated a more exact result, that the maximizing graph, at least in the case $2d|n$, is the disjoint union of $n/2d$ copies of $K_{d,d}$. This speculation was confirmed for {\em bipartite} $G$ by Kahn \cite{KahnEntropy}, and for general regular $G$ by Zhao \cite{Zhao}. The family of $n$-vertex, $m$-edge graphs was considered by Cutler and Radcliffe in \cite{CutlerRadcliffe2}, and they observed that it is a corollary of the Kruskal-Katona theorem that the lex graph $L(n,m)$ (on vertex set $\{1, \ldots, n\}$, with edges being the first $m$ pairs in lexicographic order) maximizes $i(G)$ in this class. Zykov \cite{Zykov} considered the family of graphs with a fixed number of vertices and fixed independence number, and showed that the maximum is attained by the complement of a certain Tur\'an graph. (Zykov was actually considering cliques in a graph with given clique number, but by complementation this is equivalent to considering independent sets in a graph with given independence number.) Other papers addressing questions of this kind include \cite{Hua}, \cite{LinLin}, \cite{PedersenVestergaard} and \cite{Sapozhenko}.

Having resolved the question of maximizing $i(G)$ for $G$ in a particular family, it is natural to ask which graph maximizes $i_t(G)$, the number of independent sets of size $t$ in $G$, for each possible $t$. For many families, it turns out that the graph which maximizes $i(G)$ also maximizes $i_t(G)$ for all $t$. Wingard \cite{Wingard} showed this for trees, Zykov \cite{Zykov} showed this for graphs with a given independence number (see \cite{CutlerRadcliffe} for a short proof), and Cutler and Radcliffe \cite{CutlerRadcliffe} showed this for graphs on a fixed number of edges (again, as a corollary of Kruskal-Katona). In \cite{KahnEntropy}, Kahn conjectured that for all $2d|n$ and all $t$, no $n$-vertex, $d$-regular graph admits more independent sets of size $t$ than the disjoint union of $n/2d$ copies of $K_{d,d}$; this conjecture remains open, although asymptotic evidence appears in \cite{CarrollGalvinTetali}.

\medskip

The focus of this paper is the family $\calG(n,\delta)$ of $n$-vertex graphs with minimum degree $\delta$. One might imagine that since removing edges increases the count of independent sets, the graph in $\calG(n,\delta)$ that maximizes the count of independent sets would be $\delta$-regular (or close to), but this turns out not to be the case, even for $\delta=1$. The following result is from \cite{GalvinTwoProblems}.
\begin{theorem} \label{thm-fromGalvinTwoProblems}
For $n \geq 2$ and $G \in \calG(n,1)$, we have $i(G) \leq i(K_{1,n-1})$. For $\delta \geq 2$, $n \geq 4\delta^2$ and $G \in \calG(n,\delta)$, we have $i(G) \leq i(K_{\delta,n-\delta})$.
\end{theorem}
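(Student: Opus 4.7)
The plan is to prove both statements by induction on $n$, using the standard decomposition
\[
i(G) = i(G-v) + i(G-N[v]),
\]
valid for every $v \in V(G)$. The first term counts independent sets avoiding $v$ and the second counts independent sets containing $v$; the art lies in choosing $v$ so that both summands can be controlled against $i(K_{\delta,n-\delta}) = 2^{\delta}+2^{n-\delta}-1$.

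For the case $\delta=1$, I would take $v$ to be a leaf with unique neighbor $u$, so $i(G)=i(G-v)+i(G-\{u,v\})$, and split on $d(u)$. If $d(u)\geq 2$, then $G-v$ still has minimum degree at least $1$, so by induction $i(G-v)\leq 2^{n-2}+1$; combined with the trivial bound $i(G-\{u,v\})\leq 2^{n-2}$, this recovers $i(G)\leq 2^{n-1}+1=i(K_{1,n-1})$. If $d(u)=1$, then $\{u,v\}$ is an isolated edge and $G=K_2\sqcup G'$ with $G'$ an $(n-2)$-vertex graph of minimum degree at least $1$; induction on $G'$ gives $i(G)=3\,i(G')\leq 3(2^{n-3}+1)$, which falls below $2^{n-1}+1$ for $n$ sufficiently large, with small cases verified by hand.

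For $\delta\geq 2$ I would again induct on $n$ under the hypothesis $n\geq 4\delta^{2}$, but now split on the maximum degree $\Delta(G)$. If $\Delta(G)\geq n-\delta$, I pick $v$ of maximum degree; then $|V\setminus N[v]|\leq\delta-1$, so $i(G-N[v])\leq 2^{\delta-1}$. For $i(G-v)$ one hopes that $G-v$ retains minimum degree $\delta$ (apply induction directly) or that the minimum degree has dropped only at a neighbor of $v$ of degree exactly $\delta$, in which case a nested induction on $\delta$, or an edge-addition step at the newly low-degree vertex (which only decreases $i$), handles the term. Summing should then yield $i(G)\leq 2^{\delta}+2^{n-\delta}-1$.

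The hard case, and the expected main obstacle, is when every vertex satisfies $d(v)<n-\delta$, so each vertex has at least $\delta+1$ non-neighbors. The dominant $2^{n-\delta}$ contribution to $i(K_{\delta,n-\delta})$ comes from subsets of the large side of the bipartition, and the intuition is that without a vertex whose non-neighborhood is close to $n-\delta$ in size one simply cannot ``see'' this much independence. Quantitatively, I would attempt to bound $i(G)$ by a sum of the form $1+\sum_{v} 2^{n-d(v)-1}$ (classifying each nonempty independent set by a distinguished element of it) or by iterating the decomposition at a low-degree vertex, and exploit $n\geq 4\delta^{2}$ to absorb the polynomial-in-$\delta$ overcounting factors such crude bounds introduce. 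Making the arithmetic close precisely in this quadratic-in-$\delta$ regime -- which is the signature of a counting argument paying a factor polynomial in $\delta$ at each extraction step -- is the key technical challenge.
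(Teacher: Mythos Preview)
The paper does not prove this theorem; it is quoted from \cite{GalvinTwoProblems} as background. (The paper does re-derive the special case $\delta=2$, in fact for all $n\geq 4$, as a corollary of its level-set result Theorem~\ref{thm-main} part~\ref{item-delta=2}; but there is no proof here of the general statement to compare against.) So your proposal has to be judged on its own merits.

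Your $\delta=1$ outline is essentially correct and standard. For $\delta\geq 2$, the high-$\Delta$ case can be made to work via the nested induction on $\delta$ that you mention: if $d(v)\geq n-\delta$ then $G-v$ has minimum degree at least $\delta-1$ on $n-1\geq 4(\delta-1)^2$ vertices, so the $(\delta-1)$-case gives $i(G-v)\leq 2^{\delta-1}+2^{n-\delta}-1$, and together with $i(G-N[v])\leq 2^{\delta-1}$ you land exactly on $2^{\delta}+2^{n-\delta}-1$. Note, though, that your alternative ``edge-addition'' fix points the wrong way: adding edges \emph{decreases} $i$, so it lower-bounds rather than upper-bounds $i(G-v)$.

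The genuine gap is the low-$\Delta$ case. The bound $i(G)\leq 1+\sum_v 2^{n-d(v)-1}$ is valid (classify each nonempty independent set by its least-indexed vertex), but it is too weak by a factor of order $n$, not of order polynomial in $\delta$: already for a $\delta$-regular $G$ the right side is $1+n\cdot 2^{n-\delta-1}$, exceeding the target $2^{n-\delta}+2^{\delta}-1$ by roughly $n/2$. Since $n$ is unbounded in terms of $\delta$, no hypothesis of the form $n\geq f(\delta)$ can absorb this. Iterating the decomposition at a degree-$\delta$ vertex hits the same wall: $i(G-N[v])$ is only controlled by the trivial $2^{n-\delta-1}$, so each step of the iteration loses a constant factor rather than gaining one. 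You have correctly located the crux of the argument, but neither of the mechanisms you propose actually closes it; the real content of the original proof lies precisely here, and your sketch does not yet reach it.
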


What about maximizing $i_t(G)$ for each $t$? The family $\calG(n,\delta)$ is an example of a family for which the maximizer of the total count is {\em not} the maximizer for each individual $t$. Indeed, consider the case $t=2$. Maximizing the number of independent sets of size two is the same as minimizing the number of edges, and it is easy to see that for all fixed $\delta$ and sufficiently large $n$, there are $n$-vertex graphs with minimum degree at least $\delta$ which have fewer edges than $K_{\delta,n-\delta}$ (consider for example a $\delta$-regular graph, or one which has one vertex of degree $\delta+1$ and the rest of degree $\delta$). However, we expect that anomalies like this occur for very few values of $t$. Indeed, the following conjecture is made in \cite{GalvinTwoProblems}.
\begin{conjecture} \label{conj-levelsetsweak}
For each $\delta \geq 1$ there is a $C(\delta)$ such that for all $t \geq C(\delta)$, $n \geq 2\delta$ and $G \in \calG(n,\delta)$, we have 
$$
i_t(G) \leq i_t(K_{\delta,n-\delta}) = \binom{n-\delta}{t} + \binom{\delta}{t}.
$$
\end{conjecture}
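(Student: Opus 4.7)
The plan is to attempt an induction on $n$ for each fixed $\delta$, exploiting the standard identity $i_t(G) = i_t(G-v) + i_{t-1}(G - N[v])$ at a vertex $v$ of minimum degree. Pick $v \in V(G)$ with $d(v) = \delta$. Since $|V(G) \setminus N[v]| = n - 1 - \delta$, we have the trivial bound $i_{t-1}(G - N[v]) \leq \binom{n-1-\delta}{t-1}$. If $G - v \in \calG(n-1, \delta)$, the induction hypothesis gives $i_t(G-v) \leq \binom{n-1-\delta}{t} + \binom{\delta}{t}$, and summing with Pascal's identity $\binom{n-\delta}{t} = \binom{n-1-\delta}{t} + \binom{n-1-\delta}{t-1}$ closes the inductive step cleanly. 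Base cases (small $n$, or $t > n - \delta$, where the bound is forced by the easy estimate $\alpha(G) \leq n - \delta$) can be handled directly.

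The genuine obstacle is that removing a minimum-degree vertex $v$ can drop the minimum degree below $\delta$; this happens precisely when $v$ has a neighbor $u$ with $d_G(u) = \delta$. When \emph{every} minimum-degree vertex of $G$ is adjacent to another minimum-degree vertex, the naive induction fails and $G$ is exactly the sort of vertex-critical graph foreshadowed by the abstract. To attack this case I would first develop structural information about such critical graphs, trying to show that in a critical $G$ the set $V_\delta$ of minimum-degree vertices is either small or is forced into a restricted subgraph (a matching, or a disjoint union of small cliques) so that an alternative reduction is available. Along an edge $uv$ with $d(u) = d(v) = \delta$, one might either delete $uv$---obtaining a graph of minimum degree $\delta-1$, to feed into a secondary induction on $\delta$---or delete the pair $\{u,v\}$ simultaneously and separately control the independent sets of size $t$ that meet $\{u,v\}$ via the formulae for $i_t(G)$ in terms of $i_{t-1}$ and $i_{t-2}$ at $u$ and $v$.

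A complementary line uses the bound $\alpha(G) \leq n - \delta$: if equality holds, a short argument shows $G$ contains $K_{\delta, n-\delta}$ as a spanning subgraph, and monotonicity of $i_t$ under edge addition finishes. A stability extension should push this conclusion to $t$ slightly smaller than $n - \delta$. The remaining middle range $C(\delta) \leq t \ll n$, where the extremal $G$ may look very different from $K_{\delta, n-\delta}$, seems to require a global tool such as Kruskal--Katona combined with the edge-count bound $|E(G)| \geq n\delta/2$. I expect the step that resists most strongly is the structural analysis of edge- and vertex-critical graphs for minimum degree $\delta$: the gap between the two sides of the conjectured bound is only $\binom{\delta}{t}$, which vanishes once $t$ exceeds $\delta$, so the induction must propagate this vanishingly small slack without any waste, and only a sharp structural picture of critical graphs will permit it. The partial results announced in the abstract---$\delta \leq 3$, and $t \geq 2\delta+1$ for $\delta>3$---are precisely the ranges in which this critical-graph analysis becomes tractable, which suggests that the conjecture in its full strength will stand or fall with a finer understanding of these graphs.
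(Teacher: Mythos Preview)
This statement is a \emph{conjecture} in the paper; the paper does not prove it in full, and your proposal is, appropriately, a strategy sketch rather than a proof. Your basic inductive skeleton---decompose at a minimum-degree vertex $v$ via $i_t(G)=i_t(G-v)+i_{t-1}(G-N[v])$, close the induction when $G-v$ still has minimum degree $\delta$, and otherwise reduce to edge- and vertex-critical graphs---matches the paper's Lemmas~\ref{lem-good-vertex-for-deletion} and~\ref{lem-deltacrit} exactly, and your identification of critical graphs as the obstruction is correct.

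Where your outline misses the paper's actual mechanism is in the handling of critical graphs and of large $t$. You propose structural control of $V_{=\delta}$, edge or pair deletion feeding a secondary induction on $\delta$, a stability argument near $\alpha(G)=n-\delta$, and Kruskal--Katona for the middle range. The paper uses none of these. Its essential additional tool is the enumeration of \emph{ordered} independent sets. First, Lemma~\ref{reduction to fixed size} shows that once the bound holds at a single $t\geq\delta+1$ it propagates to all larger $t$, so only a threshold value of $t$ needs to be attacked. Second---and this is what makes the range $t\geq 2\delta+1$ work, contrary to your closing remark that it is ``precisely the range in which critical-graph analysis becomes tractable''---the paper bounds ordered independent sets of size $2\delta+1$ directly, conditioning on the first vertex and using vertex-criticality to guarantee that among choices $2$ through $\delta+1$ some vertex has a neighbor outside $N(v)$, which depresses the count of remaining choices enough to beat $(n-\delta)^{\underline{2\delta+1}}$. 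No fine structure theorem for critical graphs is needed there. The detailed structural case analysis you anticipate is what the paper does for $\delta=2$ and $\delta=3$ at $t=3$, but even in those cases the ordered-set reduction to a single $t$ is what keeps the problem finite. Your proposed alternatives (induction on $\delta$ via edge deletion, simultaneous deletion of a degree-$\delta$ pair, Kruskal--Katona) are plausible heuristics but are not known to close the argument, and the paper gives no indication that they do.
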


The case $\delta=1$ of Conjecture \ref{conj-levelsetsweak} is proved in \cite{GalvinTwoProblems}, with $C(1)$ as small as it possible can be, namely $C(1)=3$. In \cite{Cutler}, Alexander, Cutler and Mink looked at the subfamily $\calG^{\rm bip}(n,\delta)$ of {\em bipartite} graphs in $\calG(n,\delta)$, and resolved the conjecture in the strongest possible way for this family.
\begin{theorem} \label{thm-ACM}
For $\delta \geq 1$, $n \geq 2\delta$, $t \geq 3$ and $G \in \calG^{\rm bip}(n,\delta)$, we have $i_t(G) \leq i_t(K_{\delta,n-\delta})$.
\end{theorem}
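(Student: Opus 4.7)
The plan is to exploit the bipartite structure. Let $G\in\calG^{\rm bip}(n,\delta)$ have bipartition $(A,B)$ with $|A|=a\leq b=|B|$; the condition that every $w\in B$ has $\delta$ neighbours in $A$ forces $a\geq\delta$, hence $b\leq n-\delta$. Every independent set $I$ of $G$ decomposes as $I=S\sqcup T$ with $S\subseteq A$ and $T\subseteq B\setminus N(S)$, and partitioning $t$-independent sets by $|S|$ yields
$$
i_t(G)=\binom{a}{t}+\binom{b}{t}+\sum_{k=1}^{t-1}\sum_{S\in\binom{A}{k}}\binom{b-|N(S)|}{t-k}.
$$
The function $a\mapsto\binom{a}{t}+\binom{n-a}{t}$ is unimodal and symmetric about $n/2$ (its forward difference at $a$ is $\binom{a}{t-1}-\binom{n-a-1}{t-1}$), so on $[\delta,n-\delta]$ it attains its maximum $\binom{\delta}{t}+\binom{n-\delta}{t}$ at the endpoints. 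The desired inequality is therefore equivalent to
$$
\sum_{k=1}^{t-1}\sum_{S\in\binom{A}{k}}\binom{b-|N(S)|}{t-k} \leq \binom{\delta}{t}+\binom{n-\delta}{t}-\binom{a}{t}-\binom{b}{t},
$$
and when $a=\delta$ the minimum-degree condition forces $G=K_{\delta,n-\delta}$ and both sides vanish.

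For $a>\delta$ I would induct on $n$, with base case $n=2\delta$ (where $a=b=\delta$ and $G=K_{\delta,\delta}=K_{\delta,n-\delta}$). The inductive step calls for a shifting operation $G\mapsto G'$ producing a graph $G'\in\calG^{\rm bip}(n,\delta)$ with bipartition $(a-1,b+1)$ and $i_t(G')\geq i_t(G)$; iterating reduces to the case $a=\delta$ handled above. The natural candidate takes a vertex $v\in A$ of minimum degree, transfers it to $B$, and rewires its incident edges so that the minimum-degree condition continues to hold on both sides. An alternative is a Kruskal--Katona-style left-compression, replacing each $N(v)$, $v\in A$, with an initial segment of a fixed ordering of $B$ of length $d(v)$; after compression the neighbourhoods are nested, the joint non-neighbourhoods $B\setminus N(S)$ admit a clean description, and the resulting sum collapses using the Vandermonde identity $\sum_k\binom{a}{k}\binom{b-\delta}{t-k}=\binom{n-\delta}{t}$.

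The main obstacle is the monotonicity lemma for the chosen operation: one must show that the shift (or the compression) does not decrease $i_t$. This lemma is false at $t=2$, since $i_2(G)=\binom{n}{2}-e(G)$ is maximised by minimising edges, which favours balanced bipartitions rather than $K_{\delta,n-\delta}$, so the hypothesis $t\geq 3$ must enter critically. I expect the proof to set up an explicit pairing between independent $t$-sets of $G$ and $G'$ plus a surplus on the $G'$ side that is nonnegative precisely because a third vertex provides enough room to absorb the rewiring. Constructing this pairing, and verifying its non-negativity exactly when $t\geq 3$, is the technical heart of the argument.
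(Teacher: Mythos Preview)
This theorem is not proved in the present paper: it is quoted from Alexander, Cutler and Mink and stated without argument. There is therefore no proof here to compare your proposal against, and I can only assess the proposal on its own merits.

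Your write-up is candidly a plan rather than a proof; you flag the monotonicity of the shift/compression as ``the technical heart'' and leave it undone. Beyond that, the compression route as stated has a genuine obstruction. It is true that left-compression (replacing each $N(v)$, $v\in A$, by the initial segment of $B$ of length $d(v)$) weakly increases $i_t$ for every $t$, since $|N_{G'}(S)|=\max_{v\in S}d(v)\leq |N_G(S)|$ for all $S\subseteq A$. But compression destroys the minimum-degree condition on the $B$ side, and the compressed graph can overshoot $K_{\delta,n-\delta}$. Take $G=C_6$ with $\delta=2$ and bipartition $a=b=3$: compressing sends every $N(v_i)$ to $\{u_1,u_2\}$, leaving $u_3$ isolated, and this graph has $i_3=5$ while $i_3(K_{2,4})=\binom{4}{3}=4$. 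So the Vandermonde collapse you anticipate does not by itself bound the sum by $i_t(K_{\delta,n-\delta})$; one must still exploit the now-lost constraint $\sum_{v\in A}d(v)\geq \delta b$ coming from the $B$-side degrees, and this is where the real work (and the $t\geq 3$ hypothesis) has to enter. Your shifting alternative is too underspecified to evaluate: ``rewires its incident edges so that the minimum-degree condition continues to hold on both sides'' hides precisely the step where $t\geq 3$ must be used, and it is not clear such a rewiring can always be made with $i_t(G')\geq i_t(G)$.
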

This provides good evidence for the truth of the strongest possible form of Conjecture \ref{conj-levelsetsweak}, namely that we may take $C(\delta)=3$.

\medskip

The purpose of this paper is to make significant progress towards this strongest possible conjecture. We completely resolve the cases $\delta =2$ and $3$, and for larger $\delta$ we deal with all but a small fraction of cases.
\begin{theorem} \label{thm-main}
\begin{enumerate}
\item \label{item-delta=2} For $\delta =2$, $t \geq 3$ and $G \in \calG(n,2)$, we have $i_t(G) \leq i_t(K_{2,n-2})$. For $n \geq 5$ and $3 \leq t \leq n-2$ we have equality iff $G=K_{2,n-2}$ or $G$ is obtained from $K_{2,n-2}$ by joining the two vertices in the partite set of size $2$.
\item \label{item-delta=3} For $\delta =3$, $t \geq 3$ and $G \in \calG(n,3)$, we have $i_t(G) \leq i_t(K_{3,n-3})$. For $n \geq 6$ and $t=3$ we have equality iff $G=K_{3,n-3}$; for $n \geq 7$ and $4 \leq t \leq n-3$ we have equality iff $G$ is obtained from $K_{3,n-3}$ by adding some edges inside the partite set of size $3$.
\item \label{item-largedeltaweak} For $\delta \geq 3$, $t \geq 2\delta + 1$ and $G \in \calG(n,\delta)$, we have $i_t(G) \leq i_t(K_{\delta,n-\delta})$. For $n \geq 3\delta+1$ and $2\delta+1 \leq t \leq n-\delta$ we have equality iff $G$ is obtained from $K_{\delta,n-\delta}$ by adding some edges inside the partite set of size $\delta$.
\end{enumerate}
\end{theorem}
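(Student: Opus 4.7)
The plan is to reduce to graphs satisfying strong criticality conditions and then to analyze them via a decomposition keyed on the independent set $H$ of high-degree vertices that this reduction produces. First, I argue the extremal $G \in \calG(n,\delta)$ may be taken to be \emph{edge-critical}: every edge $uv$ has at least one endpoint of degree exactly $\delta$, since otherwise $G - uv$ still lies in $\calG(n,\delta)$ with $i_t(G - uv) \geq i_t(G)$. Setting $D := \{v : d_G(v) = \delta\}$ and $H := V(G) \setminus D$, edge-criticality forces $H$ to be an independent set, and a parallel reduction (matching the paper's notion of a \emph{critical} graph) additionally ensures every vertex of $D$ has a neighbor in $D$, so that $G[D]$ has minimum degree at least $1$. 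Because $H$ is independent, each $t$-element independent set $I$ decomposes uniquely as $I = S \sqcup I'$ with $S \subseteq H$ arbitrary and $I'$ a $(t-|S|)$-element independent set of $G[D]$ contained in $D \setminus N(S)$, yielding
\[
i_t(G) \;=\; \sum_{S \subseteq H} i_{t-|S|}\bigl(G[D \setminus N(S)]\bigr),
\]
which for $G = K_{\delta,n-\delta}$ collapses to $\binom{n-\delta}{t} + \binom{\delta}{t}$.

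The proof then proceeds by case analysis on $h := |H|$. When $h \geq \delta$, $|D| \leq n - \delta$, so the $S = \emptyset$ term is bounded by $\binom{n-\delta}{t}$; moreover each vertex of $H$ has $\geq \delta + 1$ neighbors (all in $D$), so $|D \setminus N(S)|$ shrinks rapidly with $|S|$, and summing $\binom{|D \setminus N(S)|}{t-|S|}$ over non-empty $S \subseteq H$ gives at most $\binom{\delta}{t}$, which vanishes in part~3 under the hypothesis $t \geq 2\delta + 1 > \delta$. When $h < \delta$, $|D|$ can exceed $n - \delta$, but every vertex of $D$ must have at least $\delta - h \geq 1$ neighbors in $D$, forcing $G[D]$ to carry at least $|D|(\delta - h)/2$ edges; a Bonferroni-style inclusion-exclusion on $t$-subsets of $D$ containing an edge of $G[D]$ exploits this edge excess to bring $i_t(G[D])$ below $\binom{n-\delta}{t}$ when $t \geq 2\delta + 1$. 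For parts 1 and 2 ($\delta \in \{2,3\}$), the sub-case $h < \delta$ involves only $h \in \{0, 1, 2\}$, and the set of possible critical graphs is sufficiently constrained to permit a direct enumeration handling both the inequality and the identification of equality cases for every $t \geq 3$.

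The main obstacle is the sub-case $h = 0$, where $G$ is $\delta$-regular and the entire bound must be extracted from the $n\delta/2$ edges of $G$. A three-term Bonferroni inequality gives $i_t(G) \leq \binom{n}{t} - e(G)\binom{n-2}{t-2} + (\text{pair-of-edges term})$, which asymptotically is roughly $\binom{n}{t}(1 - \delta t(t-1)/(2n) + O(1/n^2))$, at most $\binom{n-\delta}{t} \approx \binom{n}{t}(1 - \delta t/n)$ for $t \geq 3$; but establishing the inequality for all $n \geq 3\delta + 1$ without losing constants requires careful tracking of the contributions of vertex-sharing edge pairs using the $\delta$-regularity. A second delicate point, arising in all three parts, is the identification of the equality cases: since adding edges inside the $\delta$-vertex partite set of $K_{\delta, n-\delta}$ leaves $i_t$ unchanged when $t > \delta$, one must trace which steps of the case analysis can be tight in order to recover the precise extremal characterizations stated in the theorem.
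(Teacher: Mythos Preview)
Your decomposition $i_t(G) = \sum_{S \subseteq H} i_{t-|S|}(G[D \setminus N(S)])$ is valid, but the way you propose to bound the pieces does not work.

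In the case $h \geq \delta$, you assert that summing $\binom{|D \setminus N(S)|}{t-|S|}$ over non-empty $S \subseteq H$ gives at most $\binom{\delta}{t}$, which for $t \geq 2\delta+1$ is zero. This is false. Take $|S|=1$, say $S=\{w\}$ with $d(w)=\delta+1$; then $|D \setminus N(w)|$ can be as large as $|D|-(\delta+1) = n - h - \delta - 1$, and there are $h$ such singletons. For $n$ large the contribution from singletons alone is on the order of $h\binom{n-h-\delta-1}{t-1}$, which is enormous compared with $\binom{\delta}{t}=0$. The phrase ``$|D\setminus N(S)|$ shrinks rapidly with $|S|$'' is true, but the shrinkage is by roughly $\delta+1$ per element of $S$, not enough to kill these terms. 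In the case $h<\delta$ you have the same problem: your Bonferroni argument addresses only the $S=\emptyset$ term $i_t(G[D])$, and says nothing about the non-empty $S$ terms, which are again positive and not obviously small.

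The paper's argument is organized quite differently and avoids this decomposition entirely. All three parts are proved by induction on $n$: a vertex $v$ whose deletion keeps minimum degree $\delta$ gives $i_t(G) \le i_t(G-v) + \binom{n-1-\delta}{t-1} \le i_t(K_{\delta,n-1-\delta}) + \binom{n-1-\delta}{t-1} = i_t(K_{\delta,n-\delta})$, so one reduces to \emph{vertex}-critical (as well as edge-critical) $G$. For part~3 the critical case is handled by counting \emph{ordered} independent sets of size $2\delta+1$: the key observation is that vertex-criticality forces at most $\delta-1$ vertices to share the neighborhood of any $v\in V_{=\delta}$, so among the first $\delta+1$ vertices chosen (starting from $v$) some vertex has a neighbor outside $N(v)$, yielding an extra forbidden vertex in all subsequent choices. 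This produces a strict bound $(n-\delta)^{\underline{2\delta+1}}$ on ordered independent sets with no need to split on $|H|$. Parts~1 and~2 require, in addition, a structural analysis of critical graphs for $\delta=2,3$ (a path-plus-core decomposition for $\delta=2$; a multi-case analysis on the local picture around a degree-$3$ vertex with a high-degree neighbor for $\delta=3$, $t=3$). Your Bonferroni idea does recover the $\delta$-regular sub-case, but the induction-on-$n$ reduction to critical graphs, together with the ordered-independent-set count exploiting the ``at most $\delta-1$ clones'' fact, is what actually drives the general bound.
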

We note that part \ref{item-delta=2} above provides an alternate proof of the $\delta=2$ case of the total count of independent sets, originally proved in \cite{GalvinTwoProblems}.
\begin{corollary}
For $n \geq 4$ and $G \in \calG(n,2)$, we have $i(G) \leq i(K_{2,n-2})$. For $n=4$ and $n \geq 6$ there is equality iff $G=K_{2,n-2}$.
\end{corollary}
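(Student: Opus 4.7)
The plan is to compare $i(G)$ and $i(K_{2,n-2})$ level-by-level, writing $i(\cdot)=\sum_{t\geq 0} i_t(\cdot)$. The terms at $t=0,1$ agree ($1$ and $n$), and the terms at $t\geq n-1$ vanish for both graphs (for $t=n-1$, an independent set of that size would confine all edges to the single omitted vertex, contradicting $\delta\geq 2$). So the comparison reduces to $t=2$ and $t\in\{3,\ldots,n-2\}$.

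At $t=2$, $i_2(H)=\binom{n}{2}-e(H)$, and the minimum-degree condition gives $e(G)\geq n$, while $e(K_{2,n-2})=2n-4$, so $i_2(K_{2,n-2})-i_2(G)=e(G)-(2n-4)\geq-(n-4)$. At each $t\in\{3,\ldots,n-2\}$, Theorem \ref{thm-main}(\ref{item-delta=2}) gives $i_t(K_{2,n-2})\geq i_t(G)$, with strict inequality (for $n\geq 5$) unless $G=K_{2,n-2}$ or $G=K_{2,n-2}^+$ (the latter being $K_{2,n-2}$ with the extra edge between its two size-$2$ vertices added). Hence for $G$ outside these two graphs the $n-4$ strict inequalities each contribute at least $1$, summing to at least $n-4$ and offsetting the $t=2$ deficit, yielding $i(G)\leq i(K_{2,n-2})$.

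For the uniqueness part, $G=K_{2,n-2}$ clearly gives equality, and $G=K_{2,n-2}^+$ has one extra edge and identical $i_t$ for $t\geq 3$, so $i(G)=i(K_{2,n-2})-1$. The case $n=4$ is handled by the $t=2$ calculation alone: any $G\in\calG(4,2)$ distinct from $K_{2,2}$ has $e(G)>4$ and hence $i_2(G)<i_2(K_{2,2})$. For $n\geq 6$ and $G\notin\{K_{2,n-2},K_{2,n-2}^+\}$, equality in the above accounting would force both $e(G)=n$ (so $G$ is a disjoint union of cycles $\bigsqcup_{i=1}^k C_{n_i}$) and $i_t(K_{2,n-2})-i_t(G)=1$ for every $3\leq t\leq n-2$. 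The main obstacle is ruling this out, which I plan to do via Lucas numbers: writing $i(G)=\prod_i L_{n_i}$ and using the elementary bound $L_m\leq 2^{m-1}$ for $m\geq 3$, I obtain $i(G)\leq 2^{n-k}\leq 2^{n-2}<2^{n-2}+3=i(K_{2,n-2})$ whenever $k\geq 2$. The remaining case $k=1$ (i.e.\ $G=C_n$) reduces to the inequality $L_n<2^{n-2}+3$ for $n\geq 6$, which follows by direct check at $n=6,7$ and a short induction using $L_n=L_{n-1}+L_{n-2}$.
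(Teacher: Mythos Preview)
Your argument is correct and follows essentially the same level-by-level accounting as the paper: both compare $i_t(G)$ with $i_t(K_{2,n-2})$, observe that the $t=2$ deficit is at most $n-4$, and offset it by the $n-4$ strict gains at $t=3,\ldots,n-2$ coming from the equality characterization in Theorem~\ref{thm-main}(\ref{item-delta=2}).

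The one genuine point of divergence is how you rule out equality for $2$-regular $G$ when $n\geq 6$. The paper does this at a single level: it shows by a direct count that any $2$-regular graph on $n$ vertices satisfies $i_3(G)<\binom{n-2}{3}-1$, so the surplus at $t=3$ alone already exceeds $1$. You instead work globally, using $i(C_m)=L_m$ and the product formula $i(\bigsqcup C_{n_i})=\prod L_{n_i}$ together with $L_m\leq 2^{m-1}$ to bound $i(G)\leq 2^{n-k}\leq 2^{n-2}<2^{n-2}+3=i(K_{2,n-2})$ directly. Both approaches are short; yours is arguably cleaner in that it avoids the ad hoc $i_3$ estimate and handles all cycle decompositions uniformly, while the paper's has the minor advantage of giving slightly more (a strict gap already at $t=3$). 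Either way the logic is sound.
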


\begin{proof}
The result is trivial for $n=4$. For $n=5$, it is easily verified by inspection, and we find that both $C_5$ and $K_{2,3}$ have the same total number of independent sets. So we may assume $n \geq 6$.

We clearly have $i(K'_{2,n-2}) < i(K_{2,n-2})$, where $K'_{2,n-2}$ is the graph obtained from $K_{2,n-2}$ by joining the two vertices in the partite set of size $2$. For all $G \in \calG(n,2)$ different from both $K_{2,n-2}$ and $K'_{2,n-2}$, Theorem \ref{thm-main} part \ref{item-delta=2} tells us that $i_t(G) \leq i_t(K_{2,n-2})-1$ for $3 \leq t \leq n-2$. For $t=0, 1, n-1$ and $n$ we have $i_t(G) = i_t(K_{2,n-2})$ (with the values being $1$, $n$, $0$ and $0$ respectively). We have $i_2(G) \leq {n \choose 2} - n$ (this is the number of non-edges in a $2$-regular graph), and so
\begin{equation} \label{eq-2count}
i_2(G) \leq i_2(K_{2,n-2}) + {n \choose 2} - n - {n-2 \choose 2} - 1 = i_2(K_{2,n-2}) + n -4.
\end{equation}
Putting all this together we get $i(G)\leq i(K_{2,n-2})$.

If $G$ is not $2$-regular then we have strict inequality in (\ref{eq-2count}) and so $i(G) < i(K_{2,n-2})$. If $G$ is $2$-regular, then (as we will show presently) we have $i_3(G) < i_3(K_{2,n-2})-1$ and so again $i(G) < i(K_{2,n-2})$. To see the inequality concerning independent sets of size $3$ note that in any $2$-regular graph the number of independent sets of size $3$ that include a fixed vertex $v$ is the number of non-edges in the graph induced by the $n-3$ vertices $V \setminus \{v,x,y\}$ (where $x$ and $y$ are the neighbors of $v$), which is at most $\binom{n-3}{2} - (n-4)$. It follows that
\[
i_{3}(G) \leq \frac{1}{3} \left( n\left( \binom{n-3}{2} - (n-4) \right)  \right) < \binom{n-2}{3} - 1.
\]
\end{proof}

\medskip

The paper is laid out as follows. In Section \ref{sec-preliminaryremarks} we make some easy preliminary observations that will be used throughout the rest of the paper, and we introduce the important ideas of {\em ordered independent sets} and {\em critical graphs} for a given minimum degree. In Section \ref{sec-delta=2} we deal with the case $\delta=2$ (part \ref{item-delta=2} of Theorem \ref{thm-main}). We begin Section \ref{sec-largedelta} with the proof of part \ref{item-largedeltaweak} of Theorem \ref{thm-main}, and then explain how the argument can be improved (within the class of critical graphs). This improvement will be an important ingredient in the $\delta=3$ case (part \ref{item-delta=3} of Theorem \ref{thm-main}) whose proof we present in section \ref{sec-delta=3}. Finally we present some concluding remarks and conjectures in Section \ref{sec-conclusion}.

\medskip

\noindent {\bf Notation}: Throughout the paper we use $N(v)$ for the set of vertices adjacent to $v$, and $d(v)$ for $|N(v)|$. We write $u \sim v$ to indicate that $u$ and $v$ are adjacent (and $u \nsim v$ to indicate that they are not). We use $G[Y]$ to denote the subgraph induced by a subset $Y$ of the vertices, and $E(Y)$ for the edge set of this subgraph. Finally, for $t \in {\mathbb N}$ we use $x^{\underline{t}}$ to indicate the falling power $x(x-1)\ldots (x-(t-1))$.

\section{Preliminary remarks} \label{sec-preliminaryremarks}

For integers $n$, $\delta$ and $t$, let $P(n,\delta,t)$ denote the statement that for every $G \in \calG(n,\delta)$, we have $i_t(G) \leq i_t(K_{\delta, n-\delta})$. An important observation is that if we prove $P(n,\delta,t)$ for some triple $(n,\delta, t)$ with $t \geq \delta +1$, we automatically have $P(n,\delta,t+1)$. The proof introduces the important idea of considering {\em ordered} independent sets, that is, independent sets in which an order is placed on the vertices.
\begin{lemma}\label{reduction to fixed size}
For $\delta \geq 2$ and $t \geq \delta + 1$, if $G \in \calG(n,\delta)$ satisfies $i_t(G) \leq i_t(K_{\delta,n-\delta})$ then $i_{t+1}(G) \leq i_{t+1}(K_{\delta,n-\delta})$. Moreover, if $t < n-\delta$ and $i_t(G) < i_t(K_{\delta,n-\delta})$ then $i_{t+1}(G) < i_{t+1}(K_{\delta,n-\delta})$.
\end{lemma}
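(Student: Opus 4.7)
The plan is to pass to ordered independent sets and compare the natural ``extension'' recursion in $G$ with the exact recursion in $K_{\delta,n-\delta}$. Write $\widehat{i}_s(H) = s!\, i_s(H)$ for the number of ordered independent $s$-tuples in $H$. The key identity is that every ordered independent $(t+1)$-tuple arises uniquely from an ordered independent $t$-tuple $(v_1,\dots,v_t)$ by appending an admissible last vertex $v_{t+1}$ (one outside $\{v_1,\dots,v_t\}$ and non-adjacent to each $v_i$), so
$$\widehat{i}_{t+1}(G) = \sum_{(v_1,\dots,v_t)} \#\bigl\{v_{t+1} : \{v_1,\dots,v_{t+1}\} \text{ independent}\bigr\}.$$

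For each ordered independent $t$-tuple $(v_1,\ldots,v_t)$, the number of admissible extensions is $n - t - |N(\{v_1,\dots,v_t\})\setminus\{v_1,\dots,v_t\}|$. Since the $t$-tuple is independent, none of the $d(v_1)\geq \delta$ neighbors of $v_1$ lies in $\{v_1,\dots,v_t\}$, so the excluded ``outside'' vertices number at least $\delta$. Hence each $t$-tuple admits at most $n-t-\delta$ extensions, which yields the per-graph recursion
$$i_{t+1}(G) \leq \frac{n-t-\delta}{t+1}\, i_t(G).$$
On the extremal side, when $t \geq \delta+1$ the term $\binom{\delta}{t}$ vanishes from Conjecture \ref{conj-levelsetsweak}'s formula, so $i_t(K_{\delta,n-\delta}) = \binom{n-\delta}{t}$ and
$$i_{t+1}(K_{\delta,n-\delta}) = \binom{n-\delta}{t+1} = \frac{n-\delta-t}{t+1}\, i_t(K_{\delta,n-\delta}).$$

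Observing that the multiplicative factor $(n-t-\delta)/(t+1)$ is the same on both sides, the hypothesis $i_t(G)\leq i_t(K_{\delta,n-\delta})$ gives $i_{t+1}(G)\leq i_{t+1}(K_{\delta,n-\delta})$ immediately. For the strict statement, the hypothesis $t < n-\delta$ makes $(n-t-\delta)/(t+1) > 0$, so a strict inequality at level $t$ propagates to a strict inequality at level $t+1$; in the only remaining range $t \geq n-\delta$ both sides of the desired conclusion vanish so there is nothing to check. I do not anticipate any real obstacle: the entire content of the lemma is that minimum degree $\delta$ delivers a per-tuple extension bound which matches the exact behavior on $K_{\delta,n-\delta}$, and the ordered-tuple framing makes this match transparent. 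The lemma's role is organizational, reducing the verification of $P(n,\delta,t)$ for all $t \geq \delta+1$ down to a single base case.
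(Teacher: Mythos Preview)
Your argument is correct and is essentially the same as the paper's: both pass to ordered independent sets, bound the number of extensions of a $t$-tuple by $n-t-\delta$ using the neighborhood of a single vertex, and observe that this matches the exact recursion in $K_{\delta,n-\delta}$ once $t\geq\delta+1$ kills the $\binom{\delta}{t}$ term. The only cosmetic difference is that the paper keeps everything in terms of falling factorials $(n-\delta)^{\underline{t}}$ rather than dividing through to the per-set factor $(n-t-\delta)/(t+1)$.
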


\begin{corollary}\label{reduction to fixed size-cor}
For $\delta \geq 2$ and $t \geq \delta + 1$, $P(n,\delta,t) \Rightarrow P(n,\delta,t+1)$.
\end{corollary}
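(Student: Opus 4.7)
The plan is to count ordered pairs $(J,v)$ where $J$ is an independent set of size $t+1$ and $v\in J$ is a distinguished vertex, in two ways, and compare the resulting identity/inequality for $G$ with the exact identity for $K_{\delta,n-\delta}$.

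First, counting by choosing $J$ and then $v$, there are $(t+1) i_{t+1}(G)$ such pairs. Counting instead by first removing $v$, every pair corresponds to an independent set $I = J\setminus\{v\}$ of size $t$ together with an extending vertex $v\in V\setminus N[I]$. Hence
\[
(t+1)\, i_{t+1}(G) \;=\; \sum_{I} \bigl|V\setminus N[I]\bigr|,
\]
where the sum is over independent sets $I$ of size $t$. The key observation is that since $G$ has minimum degree $\delta$, for any nonempty independent set $I$ and any vertex $u\in I$ we have $N(u)\subseteq N(I)\setminus I$ (using independence), so $|N(I)\setminus I|\geq \delta$ and therefore $|N[I]|\geq t+\delta$. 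Consequently
\[
(t+1)\, i_{t+1}(G) \;\leq\; (n-t-\delta)\, i_t(G).
\]

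Next I would verify that the analogous inequality is an equality for $K=K_{\delta,n-\delta}$ when $t\geq\delta+1$. Under this hypothesis every size-$t$ independent set $I$ of $K$ lies in the large partite class (the small class has only $\delta<t$ vertices), so $N[I]$ consists of $I$ together with the entire small partite class, giving $|V\setminus N[I]|=n-t-\delta$ for every such $I$. Therefore $(t+1) i_{t+1}(K) = (n-t-\delta) i_t(K)$.

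Combining these with the hypothesis $i_t(G)\leq i_t(K)$ yields
\[
(t+1)\, i_{t+1}(G) \;\leq\; (n-t-\delta)\, i_t(G) \;\leq\; (n-t-\delta)\, i_t(K) \;=\; (t+1)\, i_{t+1}(K),
\]
which gives the desired bound $i_{t+1}(G)\leq i_{t+1}(K)$. For the "moreover" clause, the hypothesis $t<n-\delta$ guarantees $n-t-\delta\geq 1$, so a strict inequality $i_t(G)<i_t(K)$ propagates through the middle step and produces $i_{t+1}(G)<i_{t+1}(K)$. There is no real obstacle here; the only subtlety is recognizing why $t\geq\delta+1$ is needed, which is precisely to ensure that every size-$t$ independent set of $K_{\delta,n-\delta}$ sits in the large partite class so that the extension count becomes the uniform value $n-t-\delta$.
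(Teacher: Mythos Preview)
Your proof is correct and is essentially the same argument as the paper's, phrased in terms of unordered sets and a double count of pairs $(I,v)$ rather than ordered independent sets. The paper bounds the number of ordered independent sets of size $t$ by $(n-\delta)^{\underline{t}}$ (using $t\geq\delta+1$ so that $\binom{\delta}{t}=0$), then extends each by at most $n-t-\delta$ vertices to bound ordered independent sets of size $t+1$, and divides by $(t+1)!$; your inequality $(t+1)i_{t+1}(G)\leq (n-t-\delta)i_t(G)$ together with the verified equality for $K_{\delta,n-\delta}$ encodes exactly the same extension step.
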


\begin{proof}
Fix $G \in \calG(n,\delta)$. By hypothesis, the number of ordered independent sets in $G$ of size $t$ is at most $(n-\delta)^{\underline{t}}$. For each ordered independent set of size $t$ in $G$ there are at most $n-(t+\delta)$ vertices that can be added to it to form an ordered independent set of size $t+1$ (no vertex of the independent set can be chosen, nor can any neighbor of any particular vertex in the independent set). This leads to a bound on the number of ordered independent sets in $G$ of size $t+1$ of $(n-\delta)^{\underline{t}}(n-(t+\delta)) = (n-\delta)^{\underline{t+1}}$. Dividing by $(t+1)!$, we find that $i_{t+1}(G) \leq {n-\delta \choose t+1} = i_{t+1}(K_{\delta,n-\delta})$.

If we have $i_t(G) < {n-\delta \choose t}$ then we have strict inequality in the count of ordered independent sets of size $t$, and so also as long as $n-(\delta + t)>0$ we have strict inequality in the count for $t+1$, and so $i_{t+1}(G) < {n-\delta \choose t+1}$.
\end{proof}

Given Corollary \ref{reduction to fixed size-cor}, in order to prove $P(n,\delta,t)$ for $n \geq n(\delta)$ and $t \geq t(\delta)$ it will be enough to prove $P(n,\delta, t(\delta))$. Many of our proofs will be by induction on $n$, and will be considerably aided by the following simple observation.
\begin{lemma} \label{lem-good-vertex-for-deletion}
Fix $t \geq 3$. Suppose we know $P(m,\delta,t)$ for all $m < n$. Let $G \in \calG(n,\delta)$ be such that there is $v \in V(G)$ with $G-v \in \calG(n-1,\delta)$ (that is, $G-v$ has minimum degree $\delta$). Then $i_t(G) \leq i_t(K_{\delta,n-\delta})$. Equality can only occur if all of 1) $i_t(G-v) = i_t(K_{\delta,n-1-\delta})$, 2) $G-v-N(v)$ is empty (has no edges), and 3) $d(v)=\delta$ hold.
\end{lemma}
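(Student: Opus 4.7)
The plan is to partition the independent sets of size $t$ in $G$ according to whether they contain the specified vertex $v$ or not, and bound each piece using the inductive hypothesis $P(n-1,\delta,t)$ together with a trivial bound on independent sets in an induced subgraph.

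First I would write
$$
i_t(G) \;=\; i_t(G-v) \;+\; i_{t-1}\bigl(G - v - N(v)\bigr),
$$
since the independent sets of size $t$ in $G$ not containing $v$ are exactly the independent sets of size $t$ in $G-v$, while those containing $v$ correspond bijectively to independent sets of size $t-1$ disjoint from $N(v) \cup \{v\}$. Next, since $G-v \in \calG(n-1,\delta)$, the inductive hypothesis $P(n-1,\delta,t)$ gives
$$
i_t(G-v) \;\leq\; i_t(K_{\delta,n-1-\delta}) \;=\; \binom{n-1-\delta}{t} + \binom{\delta}{t}.
$$
For the second summand, $G-v-N(v)$ has exactly $n-1-d(v)$ vertices, so
$$
i_{t-1}\bigl(G-v-N(v)\bigr) \;\leq\; \binom{n-1-d(v)}{t-1} \;\leq\; \binom{n-1-\delta}{t-1},
$$
the second inequality because $d(v)\geq\delta$. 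Combining these via Pascal's identity,
$$
i_t(G) \;\leq\; \binom{n-1-\delta}{t} + \binom{n-1-\delta}{t-1} + \binom{\delta}{t} \;=\; \binom{n-\delta}{t} + \binom{\delta}{t} \;=\; i_t(K_{\delta,n-\delta}).
$$

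For the equality clause, I would trace back through which inequalities must be tight. Equality in the first bound forces $i_t(G-v)=i_t(K_{\delta,n-1-\delta})$, giving condition~(1). Equality in $\binom{n-1-d(v)}{t-1}\leq\binom{n-1-\delta}{t-1}$ forces $d(v)=\delta$ (using $n-\delta\geq t$, so the binomial is strictly monotone in this range), giving condition~(3). Finally, equality in $i_{t-1}(G-v-N(v))\leq\binom{n-1-d(v)}{t-1}$ means every $(t-1)$-subset of $V(G-v-N(v))$ is independent; since $t-1\geq 2$, this forces $G-v-N(v)$ to have no edges, giving condition~(2).

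The proof itself is short and presents no genuine obstacle; the only point requiring a little care is the equality analysis, specifically verifying that each of the two inequalities used can be individually characterized and that the monotonicity of $\binom{n-1-k}{t-1}$ in $k$ actually forces $d(v)=\delta$ in the regime where $K_{\delta,n-\delta}$ admits any independent sets of size $t$ at all.
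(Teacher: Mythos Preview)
Your proof is correct and follows essentially the same approach as the paper: the identical decomposition $i_t(G)=i_t(G-v)+i_{t-1}(G-v-N(v))$, the same two bounds, and the same application of Pascal's identity. Your treatment of the equality clause is in fact more explicit than the paper's, which simply declares the equality statement ``evident''; your care about the monotonicity of $\binom{n-1-k}{t-1}$ is exactly the right point to flag.
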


\begin{proof}
Counting first the independent sets of size $t$ in $G$ that do not include $v$ and then those that do, and bounding the former by our hypothesis on $P(m,\delta,t)$ and the latter by the number of subsets of size $t-1$ in $G-v-N(v)$, we have (with $E_k$ the empty graph on $k$ vertices)
\begin{eqnarray*}
i_t(G) & = & i_t(G-v) + i_{t-1}(G-v-N(v)) \\
 & \leq & i_t(K_{\delta,n-1-\delta}) + i_{t-1}(E_{n-1-d(v)}) \\
 & = & {n-1-\delta \choose t} + \binom{\delta}{t} + {n-1-\delta \choose t-1} \\
 & = & {n-\delta \choose t} + \binom{\delta}{t} \\
 & = & i_t(K_{\delta,n-\delta}).
\end{eqnarray*}
The statement concerning equality is evident.
\end{proof}

Lemma \ref{lem-good-vertex-for-deletion} allows us to focus on graphs with the property that each vertex has a neighbor of degree $\delta$. Another simple lemma further restricts the graphs that must be considered.
\begin{lemma} \label{lem-deltacrit}
If $G'$ is obtained from $G$ by deleting edges, then for each $t$ we have $i_{t}(G) \leq i_{t}(G')$.
\end{lemma}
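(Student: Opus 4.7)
The plan is to observe that the conclusion is immediate from the definition of an independent set, since the property of being independent is monotone in the edge set. First I would reduce to the single-edge case: edge deletion is an iterative operation, so it suffices to prove the result when $G'=G-e$ for a single edge $e=uv$, and the general case follows by deleting the edges of $E(G)\setminus E(G')$ one at a time.

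For the single-edge case, I would exhibit the tautological injection from the set of $t$-element independent sets of $G$ into that of $G'$. If $S \subseteq V$ is a $t$-element independent set of $G$, then $E(G[S]) = \emptyset$, and since $E(G'[S]) \subseteq E(G[S]) = \emptyset$ we also have $E(G'[S]) = \emptyset$, so $S$ is a $t$-element independent set of $G'$. Thus the collection of independent $t$-sets of $G$ is literally a subcollection of that of $G'$, giving $i_t(G) \leq i_t(G')$. There is no obstacle here — the lemma is recorded purely as a convenient reference for later arguments that will want to replace a graph by a spanning subgraph without decreasing the independent set count.
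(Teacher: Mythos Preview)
Your argument is correct and is exactly the (trivial) reasoning the paper has in mind; the paper does not even supply a proof, treating the lemma as self-evident. Your reduction to a single edge and the observation that every independent set of $G$ remains independent in $G'$ is the natural justification.
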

This leads to the following definition.
\begin{definition}
Fix $\delta \geq 1$.  A graph $G$ with minimum degree $\delta$ is \emph{edge-critical} if for any edge $e$ in $G$, the minimum degree of $G - e$ is $\delta - 1$. It is \emph{vertex-critical} if for any vertex $v$ in $G$, the minimum degree of $G - v$ is strictly smaller than $\delta$. If it is both edge- and vertex-critical, we say that $G$ is {\em critical}.
\end{definition}
Lemmas \ref{lem-good-vertex-for-deletion} and \ref{lem-deltacrit} allow us to concentrate mostly on critical graphs. In Section \ref{sec-delta=2} (specifically Lemma \ref{structural characterization}) we obtain structural information about critical graphs in the case $\delta=2$, while much of Section \ref{sec-delta=3} is concerned with the same problem for $\delta=3$.

\medskip

An easy upper bound on the number of independent sets of size $t \geq 1$ in a graph with minimum degree $\delta$ is
\begin{equation} \label{easy-ub}
i_t(G) \leq \frac{n(n-(\delta+1))(n-(\delta+2)) \cdots (n-(\delta+(t-1)))}{t!}.
\end{equation}
This bound assumes that each vertex has degree $\delta$, and moreover that all vertices share the same neighborhood. We will obtain better upper bounds by considering more carefully when these two conditions actually hold, as having many vertices which share the same neighborhood forces those vertices in the neighborhood to have large degree.
To begin this process, it will be helpful to distinguish between vertices with degree $\delta$ and those with degree larger than $\delta$.
Set
$$
V_{=\delta} = \{v \in V(G): d(v) = \delta\}
$$
and
$$
V_{>\delta} = \{v \in V(G): d(v) > \delta\}.
$$
Most of the proofs proceed by realizing that a critical graph must have at least one of a small list of different structures in it, and we exploit the presence of a structure to significantly dampen the easy upper bound.

\section{Proof of Theorem \ref{thm-main}, part \ref{item-delta=2} ($\delta=2$)} \label{sec-delta=2}

Recall that we want to show that for $\delta =2$, $t \geq 3$ and $G \in \calG(n,2)$, we have $i_t(G) \leq i_t(K_{2,n-2})$, and that for $n \geq 5$ and $3 \leq t \leq n-2$ we have equality iff $G=K_{2,n-2}$ or $K'_{2,n-2}$ (obtained from $G$ by joining the two vertices in the partite set of size $2$). We concern ourselves initially with the inequality, and discuss the cases of equality at the end. By Corollary \ref{reduction to fixed size-cor}, it is enough to consider $t=3$, and we will prove this case by induction on $n$, the base cases $n \leq 5$ being trivial. So from here on we assume that $n > 5$ and that $P(m,2,3)$ has been established for all $m < n$, and let $G \in \calG(n,2)$ be given. By Lemmas \ref{lem-good-vertex-for-deletion} and \ref{lem-deltacrit} we may assume that $G$ is critical.

We begin with two lemmas, the first of which is well-known (see e.g. \cite{HopkinsStaton}), and the second of which gives structural information about critical graphs (in the case $\delta=2$).
\begin{lemma}\label{path-and-cycle-count}
Let $k \geq 1$ and $0 \leq t \leq k+1$.  In the $k$-path $P_k$ we have
\[
i_{t}(P_k) = \binom{k+1-t}{t}.
\]
Let $k \geq 3$ and $0 \leq t \leq k-1$.  In the $k$-cycle $C_k$ we have
\[
i_{t}(C_k) = \binom{k-t}{t} + \binom{k-t-1}{t-1}.
\]
\end{lemma}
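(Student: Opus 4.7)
The plan is to handle the path first and then deduce the cycle result by conditioning on a single vertex. Both are classical identities and the proof can be kept short.

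For the path $P_k$ (on $k$ vertices $v_1, \ldots, v_k$), I would give a direct bijective argument. Label an independent set of size $t$ by the strictly increasing sequence of indices $1 \leq a_1 < a_2 < \cdots < a_t \leq k$ of its vertices; independence translates to $a_{i+1} - a_i \geq 2$ for every $i$. Setting $b_i = a_i - (i-1)$ yields a strictly increasing sequence in $\{1, 2, \ldots, k-t+1\}$, and this map is a bijection onto such sequences. Hence the count is $\binom{k-t+1}{t} = \binom{k+1-t}{t}$, which covers all admissible $t$ (and gives $0$ once $t > \lceil k/2 \rceil$, consistent with the binomial convention). If preferred, one can instead set up the standard recurrence $i_t(P_k) = i_t(P_{k-1}) + i_{t-1}(P_{k-2})$ by conditioning on whether $v_k$ is in the set, check the trivial base cases $k \in \{1,2\}$, and verify that $\binom{k+1-t}{t} = \binom{k-t}{t} + \binom{k-t}{t-1}$ is an instance of Pascal's rule.

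For the cycle $C_k$ (on vertices $v_1, \ldots, v_k$ with $v_i \sim v_{i+1}$ cyclically), I would fix the vertex $v_1$ and split the count of independent sets of size $t$ according to whether $v_1$ is included. If $v_1 \notin I$, then $I$ is an independent set of size $t$ in $C_k - v_1$, which is the path on the $k-1$ remaining vertices; by the path formula this contributes $\binom{(k-1)+1-t}{t} = \binom{k-t}{t}$. If $v_1 \in I$, then both $v_2$ and $v_k$ are forbidden, so $I \setminus \{v_1\}$ is an independent set of size $t-1$ in the path $v_3, v_4, \ldots, v_{k-1}$ on $k-3$ vertices, contributing $\binom{(k-3)+1-(t-1)}{t-1} = \binom{k-t-1}{t-1}$. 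Summing gives the claimed formula.

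There is no real obstacle here; the only thing to watch is the boundary conventions (e.g.\ treating $\binom{m}{j} = 0$ when $m < j$ or $j < 0$) so that the small-$k$, extreme-$t$ cases are covered uniformly by the stated formulas. The whole argument is a couple of lines once the bijection (or recurrence) for $P_k$ is set up.
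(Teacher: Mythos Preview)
Your argument is correct: the bijection $b_i = a_i - (i-1)$ is the standard way to count gap-$2$ subsets of $\{1,\dots,k\}$, and conditioning on a fixed vertex of $C_k$ reduces cleanly to two path instances as you describe. The paper itself does not prove this lemma at all; it merely states it and cites it as well-known (referring to Hopkins--Staton), so there is nothing to compare against and your short proof would be a perfectly acceptable filling-in of the omitted details.
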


\begin{lemma}\label{structural characterization}
Fix $\delta=2$. Let $G$ be a connected $n$-vertex critical graph. Either
\begin{enumerate}
\item $G$ is a cycle or
\item $V(G)$ can be partitioned as $Y_1 \cup Y_2$ with $2 \leq |Y_1| \leq n-3$ in such a way that $Y_1$ induces a path, $Y_2$ induces a graph with minimum degree $2$, each endvertex of the path induced by $Y_1$ has exactly one edge to $Y_2$, the endpoints of these two edges in $Y_2$ are either the same or non-adjacent, and there are no other edges from $Y_1$ to $Y_2$.
\end{enumerate}
\end{lemma}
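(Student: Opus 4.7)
The plan is to take $Y_1$ to be the vertex set of a suitable path component of $G[V_{=2}]$; this works in all but one exceptional subcase, resolved by swapping to a different component. The main obstacle is showing that the alternative component is always available when it is needed.

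First I would extract structural consequences of the critical hypotheses. Edge-criticality means that each edge has a degree-$2$ endpoint, so $V_{>2}$ is an independent set. Vertex-criticality means that each vertex has a degree-$2$ neighbor, so no vertex of $V_{=2}$ is isolated in $G[V_{=2}]$. Since $G[V_{=2}]$ has maximum degree at most $2$ and no isolated vertex, each of its components is either a path on at least two vertices or a cycle. A cycle component has no edges to the rest of $G$, so connectivity would force $G$ to be that cycle, giving Case $1$ of the lemma. Assume henceforth that $G$ is not a cycle, so $V_{>2} \neq \emptyset$ and every component of $G[V_{=2}]$ is a path on at least two vertices.

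Now let $P = v_1 v_2 \cdots v_k$ be a path component of $G[V_{=2}]$ and tentatively set $Y_1 = V(P)$, $Y_2 = V(G) \setminus Y_1$. Each internal vertex of $P$ uses both its $G$-edges inside $P$, while each endpoint $v_1, v_k$ has exactly one further edge, going to some $w_1, w_k \in V_{>2}$ (since $P$ is a full component of $G[V_{=2}]$). This immediately implies that $G[Y_1]$ is the path $P$; that each endvertex of the path has exactly one edge to $Y_2$; that no other $Y_1$-to-$Y_2$ edges exist; and that $w_1, w_k$ are either equal or non-adjacent (being two vertices of the independent set $V_{>2}$). A short case check of $|Y_2| \in \{0,1,2\}$ forces a contradiction in each instance (either some vertex of $V_{>2}$ lacks enough potential neighbors, or some $Y_2$-vertex has degree below $2$), giving $|Y_2| \geq 3$, i.e.\ $|Y_1| \leq n-3$.

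Only the minimum-degree-$2$ condition on $G[Y_2]$ can still fail. For $u \in Y_2 \setminus \{w_1, w_k\}$ this is automatic since $u$ has no neighbors in $Y_1$; and if $w_1 \neq w_k$ then each of $w_1, w_k$ loses exactly one edge, keeping degree at least $d(w_i) - 1 \geq 2$. The one problematic subcase is $w_1 = w_k = w$ with $d(w) = 3$. To repair this, let $z$ be the third neighbor of $w$. Edge-criticality of $wz$ together with $d(w) = 3$ forces $d(z) = 2$, and $z \notin P$ since $z$ is neither $v_1$ nor $v_k$ and each internal $v_i$ has both its edges inside $P$. Thus $z$ lies in some other component $C$ of $G[V_{=2}]$, with $|C| \geq 2$; let $z_b$ be its other endpoint and $w' \in V_{>2}$ its external neighbor. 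If $w' = w$, then $v_1, v_k, z, z_b$ would be four distinct neighbors of $w$, contradicting $d(w) = 3$. So $w' \neq w$, and substituting $Y_1 = V(C)$ returns us to the already-handled case $w_1 \neq w_k$. The crux is precisely this last substitution: criticality, applied a second time via the edge $wz$, supplies the vertex $z$ and hence the alternative component $C$ needed to make the swap go through.
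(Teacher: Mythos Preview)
Your proof is correct and complete; the swap to the second component in the bad subcase is nicely handled.

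Your route differs from the paper's. The paper splits on $|V_{>2}|$: if there is a single vertex $v$ of degree greater than $2$, a parity argument gives $d(v)\ge 4$ even, and an Eulerian cycle-decomposition of $G$ yields a cycle through $v$ whose deletion of $v$ produces the path $Y_1$; if there are at least two such vertices, $Y_1$ is taken as the set of internal vertices of a shortest path between two of them, with vertex-criticality used only to force that path to have at least two internal vertices. You instead work uniformly with the components of $G[V_{=2}]$: any such component is already a path of length $\ge 2$, and the single obstruction (both ends attaching to the same vertex $w$ with $d(w)=3$) is repaired by passing to the component through $w$'s third neighbour. Your argument avoids the parity and Eulerian ingredients entirely, at the cost of the small swap step; the paper's argument avoids the swap but needs the cycle decomposition in the $|V_{>2}|=1$ case. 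Both yield exactly the structural statement needed downstream.
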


\begin{proof}
If $G$ is not a cycle, then it has some vertices of degree greater than $2$. If there is exactly one such vertex, say $v$, then by parity considerations $d(v)$ is even and at least $4$. Since all degrees are even, the edge set may be partitioned into cycles. Take any cycle through $v$ and remove $v$ from it to get a path whose vertex set can be taken to be $Y_1$.

There remains the case when $G$ has at least two vertices with degree larger than $2$. Since $G$ is edge-critical, $V_{>\delta}$ forms an independent set and so there is a path on at least $3$ vertices joining distinct vertices $v_1, v_2 \in V_{>\delta}$, all of whose internal vertices $u_1, \ldots, u_k$ have degree $2$ (the shortest path joining two vertices in $V_{>\delta}$ would work). Since $G$ is vertex-critical we must in fact have $k \geq 2$, since otherwise $u_1$ would be a vertex whose deletion leaves a graph with minimum degree $2$. We may now take $Y_1 = \{u_1, \ldots, u_k\}$. Note that the $Y_2$ endpoints ($v_1$ and $v_2$) of the edges from $u_1$ and $u_k$ to $Y_2$ are both in $V_{>\delta}$ and so are non-adjacent.
\end{proof}

Armed with Lemmas \ref{path-and-cycle-count} and \ref{structural characterization} we now show that for critical $G$ we have
$$
i_3(G) < i_3(K_{2,n-2}) = \binom{n-2}{3}.
$$

If $G$ is the $n$-cycle, then we are done by Lemma \ref{path-and-cycle-count}. If $G$ is a disjoint union of cycles, then choose one such, of length $k$, and call its vertex set $Y_1$, and set $Y_2=V\setminus Y_1$. We will count the number of independent sets of size $3$ in $G$ by considering how the independent set splits across $Y_1$ and $Y_2$.

By Lemma \ref{path-and-cycle-count}, there are  $\binom{k-3}{3} + \binom{k-4}{2}$ independent sets of size 3 in $Y_1$ (note that this is still a valid upper bound when $k = 3$), and by induction there are at most
$\binom{n-k-2}{3}$ independent sets of size 3 in $Y_2$. There are $\left(\binom{k-1}{2} - 1\right)\left(n-k\right)$
independent sets with two vertices in $Y_1$ and one in $Y_2$ (the first factor here simply counting the number of non-edges in a $k$-cycle). Finally, there are
$k\left(\binom{n-k-1}{2} - 1\right)$
independent sets with one vertex in $Y_1$ and two in $Y_2$ (the second factor counting the number of non-edges in a $2$-regular graph on $n-k$ vertices). The sum of these bounds is easily seen to be $\binom{n-2}{3}-k$, so strictly smaller than $\binom{n-2}{3}$.

We may now assume that $G$ has a component that is not $2$-regular. Choose one such component. Let $Y_1$ be as constructed in Lemma \ref{structural characterization} and let $Y_2$ be augmented by including the vertex sets of all other components. Denote by $v_1$, $v_2$ the neighbors in $Y_2$ of the endpoints of the path. Note that it is possible that $v_1 = v_2$, but if not then by Lemma \ref{structural characterization} we have $v_1 \nsim v_2$.  We will again upper bound $i_3(G)$ by considering the possible splitting of independent sets across $Y_1$ and $Y_2$.

By Lemma \ref{path-and-cycle-count}, there are $\binom{k-2}{3}$
independent sets of size 3 in $Y_1$, and by induction there are at most
$\binom{n-k-2}{3}$ independent sets of size 3 in $Y_2$.

The number of independent sets of size $3$ in $G$ that have two vertices in $Y_1$ and one in $Y_2$ is at most
\begin{equation*}
\binom{k-3}{2}\left( n-k \right) + \left(\binom{k-1}{2} - \binom{k-3}{2} \right) (n-k-1).
\end{equation*}
The first term above counts those independent sets in which neither endpoint of the $k$-path is among the two vertices from $Y_1$, and uses Lemma \ref{path-and-cycle-count}. The second term upper bounds the number of independent sets in which at least one endpoint of the $k$-path is among the two vertices from $Y_1$, and again uses Lemma \ref{path-and-cycle-count}. (Note that when $k=2$ the application of Lemma \ref{path-and-cycle-count} is not valid, since when we remove the endvertices we are dealing with a path of length $0$, outside the range of validity of the lemma; however, the above bound is valid for $k=2$, since it equals $1$ in this case.)
Finally, the number of independent sets of size $3$ in $G$ that have one vertex in $Y_1$ and two in $Y_2$ is at most
$$
\left((k-2)\left(\binom{n-k}{2} - |E(Y_2)|\right)\right) + \sum_{i=1}^2 \left(\binom{n-k-1}{2} - |E(Y_2)| + d_{Y_2}(v_i)\right).
$$
The first term here counts the number of independent sets in which the one vertex from $Y_1$ is not an endvertex, the second factor being simply the number of non-edges in $G[Y_2]$. The second term counts those with the vertex from $Y_1$ being the neighbor of $v_i$, the second factor being the number of non-edges in $G[Y_2]-v_i$.

The sum of all of these bounds, when subtracted from $\binom{n-2}{3}$, simplifies to
\begin{equation}\label{t=3pathcount}
-(k-1)n + k^2 + k - 3 + k|E(Y_2)| - d_{Y_2}(v_1) - d_{Y_2}(v_2),
\end{equation}
a quantity which we wish to show is strictly positive.

Suppose first that $Y_1$ can be chosen so that $v_1 \neq v_2$. Recall that in this case $v_1 \nsim v_2$, so $d_{Y_2}(v_1) + d_{Y_2}(v_2) \leq |E(Y_2)|$.  Combining this with $|E(Y_2)| \geq n-k$ we get that
(\ref{t=3pathcount}) is at most $2k-3$, which is indeed strictly positive for $k \geq 2$.

If $v_1=v_2=v$, then we first note that
$$
|E(Y_2)| = \frac{1}{2}\sum_{w \in Y_2} d_{Y_2}(w) \geq \frac{d_{Y_2}(v)}{2} +(n-k-1)
$$
(since $G[Y_2]$ has minimum degree $2$). Inserting into (\ref{t=3pathcount}) we find that (\ref{t=3pathcount}) is at most
\begin{equation} \label{v1=v2}
n-3 + \left(\frac{k}{2} - 2\right)d_{Y_2}(v).
\end{equation}
This is clearly strictly positive for $k\geq 4$, and for $k=3$ strict positivity follows from $d_{Y_2}(v) < 2(n-3)$, which is true since in fact $d_{Y_2}(v) < n-3$ in this case.

If $k=2$, then (\ref{v1=v2}) is strictly positive unless $d_{Y_2}=n-3$ (the largest possible value it can take in this case). There is just one critical graph $G$ with the property that for all possible choices of $Y_1$ satisfying the conclusions of Lemma \ref{structural characterization} we have $|Y_1|=2$, $v_1=v_2=v$ and $d_{Y_2}(v)=n-3$; this is the windmill graph (see Figure \ref{fig:windmill}) consisting of $(n-1)/2$ triangles with a single vertex in common to all the triangles, and otherwise no overlap between the vertex sets (note that the degree condition on $v$ forces $G$ to be connected). A direct count gives $(n-1)(n-3)(n-5)/6 < \binom{n-2}{3}$ independent sets of size 3 in this particular graph.
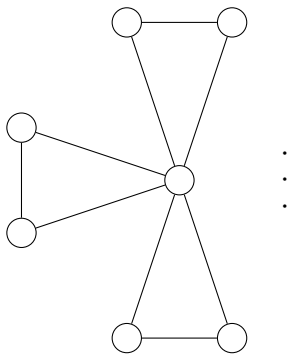
\begin{figure}[ht!]
\begin{center}
\begin{tikzpicture}[scale=.7]
	\node (v1) at (3,3) [circle,draw] {};
	\node (v2) at (2,0) [circle,draw] {};
	\node (v3) at (4,0) [circle,draw] {};
	\node (v4) at (0,2) [circle,draw] {};
	\node (v5) at (0,4) [circle,draw] {};
	\node (v6) at (2,6) [circle,draw] {};
	\node (v7) at (4,6) [circle,draw] {};
	\node at (5,2.5) {$\cdot$};
	\node at (5,3) {$\cdot$};
	\node at (5,3.5) {$\cdot$};

	\foreach \from/\to in {v1/v2,v1/v3,v1/v4,v1/v5,v1/v6,v1/v7,v2/v3,v4/v5,v6/v7}
	\draw (\from) -- (\to);
\end{tikzpicture}
\caption{The windmill graph.\label{fig:windmill}}
\end{center}
\end{figure}

This completes the proof that $i_t(G) \leq i_t(K_{2,n-2})$ for all $t \geq 3$ and $G \in \calG(n,2)$. We now turn to considering the cases where equality holds in the range $n \geq 5$ and $3 \leq t \leq n-2$. For $t=3$ and $n=5$, by inspection we see that we have equality iff $G=K_{2,3}$ or $K'_{2,3}$ (obtained from $K_{2,3}$ by adding an edge inside the partite set of size $2$). For larger $n$, we prove by induction that equality can be achieved only for these two graphs. If a graph $G$ is not edge-critical, we delete edges until we obtain a graph $G'$ which is edge-critical, using Lemma \ref{lem-deltacrit} to get $i_t(G) \leq i_t(G')$.  If $G'$ is critical, then the discussion in this section shows that we cannot achieve equality.  If $G'$ is not vertex-critical, Lemma \ref{lem-good-vertex-for-deletion} and our induction hypothesis shows that we only achieve equality for $G'$ if there is $v \in V(G')$ with $G'-v = K_{2,n-3}$ or $K'_{2,n-3}$, $G'-v-N(v)$ empty, and $d(v)=2$.  First, notice that $G'-v = K'_{2,n-3}$ implies that $G'$ is not edge-critical, so equality can only occur when $G'-v = K_{2,n-3}$.  If $G'-v=K_{2,n-3}$, the second and third conditions tell us that $N(v)$ is exactly the partite set of size $2$ in $K_{2,n-3}$, that is, that $G'=K_{2,n-2}$.  From here it is evident that equality can only occur for $G = K_{2,n-2}$ or $K'_{2,n-2}$.

Now for each fixed $n\geq 5$, we conclude from Lemma \ref{reduction to fixed size} that for $3 \leq t \leq n-2$ we {\em cannot} have equality unless $G=K_{2,n-2}$ or $K'_{2,n-2}$; and since the equality is trivial for these two cases, the proof is complete.

\section{Proof of Theorem \ref{thm-main}, part \ref{item-largedeltaweak} ($\delta \geq 3$)} \label{sec-largedelta}

Throughout this section we set $h = |V_{>\delta}|$ and $\ell = |V_{=\delta}|$; note that $h+\ell=n$. We begin this section with the proof of Theorem \ref{thm-main} part \ref{item-largedeltaweak}; we then show how the method used may be improved to obtain a stronger result within the class of critical graphs (Lemma \ref{lem-largedeltastrong} below), a result which will play a role in the proof of Theorem \ref{thm-main}, part \ref{item-delta=3} ($\delta=3$) that will be given in Section \ref{sec-delta=3}.

\medskip

Recall that we are trying to show that for $\delta \geq 3$, $t \geq 2\delta + 1$ and $G \in \calG(n,\delta)$, we have $i_t(G) \leq i_t(K_{\delta,n-\delta})$, and that for $n \geq 3\delta+1$ and $2\delta + 1 \leq t \leq n-\delta$ there is equality iff $G$ is obtained from $K_{\delta,n-\delta}$ by adding some edges inside the partite set of size $\delta$. As with Theorem \ref{thm-main} part \ref{item-delta=2} we begin with the inequality and discuss cases of equality at the end.

By Corollary \ref{reduction to fixed size-cor} it is enough to consider $t=2\delta+1$. We prove $P(n,\delta, 2\delta+1)$ by induction on $n$. For $n < 3\delta +1$ the result is trivial, since in this range all $G \in \calG(n,\delta)$ have $i_t(G)=0$. It is also trivial for $n=3\delta+1$, since the only graphs $G$ in $\calG(n,\delta)$ with $i_t(G)>0$ in this case are those that are obtained from $K_{\delta,n-\delta}$ by the addition of some edges inside the partite set of size $\delta$, and all such $G$ have $i_t(G)=1=i_t(K_{\delta,n-\delta})$. So from now on we assume $n \geq 3\delta+2$ and that $P(m,\delta,2\delta+1)$ is true for all $m < n$, and we seek to establish $P(n,\delta,2\delta+1)$.

By Lemmas \ref{lem-good-vertex-for-deletion} and \ref{lem-deltacrit} we may restrict attention to $G$ which are critical (for minimum degree $\delta$). To allow the induction to proceed, we need to show that the number of ordered independent sets of size $2\delta + 1$ in $G$ is at most $(n-\delta)^{\underline{2\delta+1}}$.

We partition ordered independent sets according to whether the first vertex is in $V_{>\delta}$ or in $V_{=\delta}$. In the first case (first vertex in $V_{>\delta}$) there are at most
\begin{eqnarray} \label{eq-hcount}
h(n-(\delta+2))(n-(\delta+3)) \cdots (n-(3\delta+1)) & = & \frac{h}{n} \left(n (n-(\delta+2))^{\underline{2\delta}}\right) \nonumber \\
& <  & \frac{h}{n} (n-\delta)^{\underline{2\delta+1}} \label{eq-hcount}
\end{eqnarray}
ordered independent sets of size $2\delta+1$, since once the first vertex has been chosen there are at most $n-(\delta+2)$ choices for the second vertex, then at most $n-(\delta + 3)$ choices for the third, and so on.

In the second case (first vertex in $V_{=\delta}$) there are at most
$$
\ell(n-(\delta+1))(n-(\delta+2)) \cdots (n-2\delta)
$$
ways to choose the first $\delta+1$ vertices in the ordered independent set. The key observation now is that since $G$ is vertex-critical there can be at most $\delta-1$ vertices distinct from $v$ with the same neighborhood as $v$, where $v$ is the first vertex of the ordered independent set. It follows that one of choices $2$ through $\delta$ has a neighbor $w$ outside of $N(v)$. Since $w$ cannot be included in the independent set, there are at most
$$
(n-(2\delta+2))(n-(2\delta+3)) \cdots (n-(3\delta+1))
$$
choices for the final $\delta$ vertices. Combining these bounds, there are at most
$$
\frac{\ell}{n} \left(n \frac{(n-(\delta+1))^{\underline{2\delta+1}}}{n-(2\delta+1)}\right) < \frac{\ell}{n} (n-\delta)^{\underline{2\delta+1}}
$$
ordered independent sets of size $2\delta+1$ that begin with a vertex from $V_{=\delta}$. Combining with (\ref{eq-hcount}) we get $i_{2\delta_+1}(G) < (n-\delta)^{\underline{2\delta+1}}/(2\delta+1)!$, as required.

This completes the proof that $i_t(G) \leq i_t(K_{\delta,n-\delta})$ for all $t \geq 2\delta+1$ and $G \in \calG(n,\delta)$. We now turn to considering the cases where equality holds in the range $n \geq 3\delta+1$ and $2\delta+1 \leq t \leq n-\delta$. For $t=2\delta+1$ and $n=3\delta+1$, we clearly have equality iff $G$ is obtained from $K_{\delta,2\delta+1}$ by adding some edges inside the partite set of size $\delta$. For larger $n$, we prove by induction that equality can be achieved only for a graph of this form. If a graph $G$ is not edge-critical, we delete edges until we obtain a graph $G'$ which is edge-critical, using Lemma \ref{lem-deltacrit} to get $i_{t}(G) \leq i_{t}(G')$.  If $G'$ is critical, then the discussion in this section shows that we cannot achieve equality.  If $G'$ is not vertex-critical, Lemma \ref{lem-good-vertex-for-deletion} and our induction hypothesis shows that we only achieve equality for $G'$ if there is $v \in V(G')$ with $G'-v$ obtained from $K_{\delta,n-\delta-1}$ by adding some edges inside the partite set of size $\delta$, $G'-v-N(v)$ empty, and $d(v)=\delta$.  First, notice that the cases where $G'-v \neq K_{\delta,n-\delta-1}$ imply that $G'$ is not edge-critical, so in fact equality can only occur when $G'-v = K_{\delta,n-\delta-1}$.  Since $d(v)=\delta$ the neighborhood of $v$ cannot include all of the partite set of size $n-1-\delta$. If it fails to include a vertex of the partite set of size $\delta$, there must be an edge in $G-v-N(v)$; so in fact, $N(v)$ is exactly the partite set of size $\delta$ and $G' = K_{\delta,n-\delta}$.  From here it is evident that equality can only occur for $G$ obtained from $K_{\delta,n-\delta}$ by adding some edges inside the partite set of size $\delta$.

Now for each fixed $n\geq 3\delta+1$, we conclude from Lemma \ref{reduction to fixed size} that for $2\delta+1 \leq t \leq n-\delta$ we {\em cannot} have equality unless $G$ is obtained from $K_{\delta,n-\delta}$ by adding some edges inside the partite set of size $\delta$; and since the equality is trivial in  these cases, the proof is complete.

\medskip

The ideas introduced here to bound the number of ordered independent sets in a critical graph, can be modified to give a result that covers a slightly larger range of $t$, at the expense of requiring $n$ to be a little larger. Specifically we have the following.
\begin{lemma} \label{lem-largedeltastrong}
For all $\delta \geq 3$, $t \geq \delta + 1$, $n \geq 3.2\delta$ and vertex-critical $G \in \calG(n,\delta)$, we have $i_t(G) < i_t(K_{\delta,n-\delta})$. For $\delta=3$ and $t=4$ we get the same conclusion for vertex-critical $G \in \calG(n,3)$ with $n \geq 8$.
\end{lemma}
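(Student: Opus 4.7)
Plan: By Lemma \ref{reduction to fixed size}, strict inequality $i_t(G) < i_t(K_{\delta,n-\delta})$ at $t = \delta+1$ propagates to every $t$ in the range $\delta+1 \leq t \leq n-\delta$, so it suffices to prove the base case $t = \delta+1$. I would mirror the scheme of Theorem \ref{thm-main} part \ref{item-largedeltaweak}: count ordered independent sets of size $\delta+1$ and split according to whether the first vertex lies in $V_{>\delta}$ or $V_{=\delta}$. The $V_{>\delta}$ contribution, at most $h(n-\delta-2)^{\underline{\delta}}$, is already strictly less than $(h/n)(n-\delta)^{\underline{\delta+1}}$ by the same computation as in the earlier proof. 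The real work is to bound the $V_{=\delta}$ contribution by $(\ell/n)(n-\delta)^{\underline{\delta+1}}$ with some room to spare.

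The single-$w$ trick produces no saving when $t = \delta+1$, since there are no positions beyond $v_{\delta+1}$ from which to exclude $w$. To get around this I would exploit the twin class at $v$. Writing $V_{N(v)} = \{u \in V : N(u) = N(v)\}$ and $m_v = |V_{N(v)}|$, vertex-criticality gives $m_v \leq \delta$. For an ordered independent set $(v, v_2, \ldots, v_{\delta+1})$ with $v \in V_{=\delta}$, let $j$ be the smallest index with $v_j \notin V_{N(v)}$; such $j$ exists and lies in $\{2, \ldots, m_v + 1\}$, since $|V_{N(v)}\setminus\{v\}| = m_v - 1 \leq \delta - 1$. The pre-escape positions $v_2, \ldots, v_{j-1}$ contribute at most $(m_v - 1)^{\underline{j-2}}$ (ordered picks from the small set $V_{N(v)} \setminus \{v\}$); the escape position $v_j$ contributes at most $n - \delta - m_v$ (it must avoid $N[v] \cup V_{N(v)}$); and post-escape positions apply the standard $w$-trick, giving at most $n - \delta - i$ choices at position $i > j$. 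Summing over $j$ yields a per-vertex bound $S(m_v)$, and an explicit telescoping reveals the clean closed form $S(\delta) = (n-\delta-2)^{\underline{\delta-2}}(n-2\delta)^2$, with $S(m)$ monotone increasing in $m$.

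The main obstacle is that $n \cdot S(\delta) < (n-\delta)^{\underline{\delta+1}}$ only when $n < \delta(\delta + 1)$, so the uniform worst-case bound $\ell \cdot S(\delta)$ does not carry the day for the full range $n \geq 3.2\delta$. The additional input I would use is structural: the condition $m_v = \delta$ forces a rigid local $K_{\delta,\delta}$ at $v$, since the $\delta$ twins in $V_{N(v)}$ are saturated by edges to $N(v)$, and applying vertex-criticality to the degree-$\delta$ neighbor of $v$ inside $N(v)$ pins down $N(u) = V_{N(v)}$ for at least one $u \in N(v)$. Either this $K_{\delta,\delta}$ sits as its own component of $G$, in which case the $2\delta$ vertices involved cannot participate in other twin classes of size $\delta$, or $N(v)$ meets $V_{>\delta}$, which tightens the $V_{>\delta}$ side of the count. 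Either way, the number of $v \in V_{=\delta}$ with $m_v = \delta$ is tightly restricted by $n$, $\ell$, and $h$, and combining this with the bound $S(m_v) \leq S(\delta-1)$ for the remaining vertices produces the required strict inequality for $n \geq 3.2\delta$. The sharper threshold $n \geq 8$ in the case $\delta=3$, $t=4$ is obtained by a more careful direct analysis of the small vertex-critical graphs involved.
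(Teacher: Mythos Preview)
Your reduction to $t=\delta+1$ and the first-vertex split match the paper, but from that point the two arguments diverge, and your fix for the large-$n$ regime does not go through as written.

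The gap is in the structural step. You assert that the number of $v \in V_{=\delta}$ with $m_v = \delta$ is ``tightly restricted by $n$, $\ell$, and $h$'', but consider $G$ a disjoint union of $n/(2\delta)$ copies of $K_{\delta,\delta}$. This graph is vertex-critical, yet \emph{every} vertex has $m_v = \delta$; your observation that each $K_{\delta,\delta}$ is its own component and that its $2\delta$ vertices do not participate in other size-$\delta$ twin classes is true but does nothing to reduce the count. For this $G$ your per-vertex bound gives $n\cdot S(\delta)$ ordered independent sets, and you have already computed that this exceeds $(n-\delta)^{\underline{\delta+1}}$ once $n \geq \delta(\delta+1)$. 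The alternative branch ``$N(v)$ meets $V_{>\delta}$, which tightens the $V_{>\delta}$ side'' is also not a usable quantitative statement: knowing $h \geq 1$ does not sharpen the bound $h(n-\delta-2)^{\underline{\delta}}$ on the $V_{>\delta}$ contribution. A rescue along your lines would require a genuinely new count for graphs with $K_{\delta,\delta}$ components (and for mixtures of such components with other pieces), which you have not supplied.

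The paper avoids twin classes altogether. It fixes a threshold $m$ (eventually $m=n/2$) and splits according to whether $G$ has at most $\delta-2$ vertices of degree exceeding $m$, or at least $\delta-1$. In the first case, for $v \in V_{=\delta}$ one classifies the \emph{second} vertex of the ordered set by whether it has $0$, $1$, or at least $2$ neighbours outside $N(v)$; the key observation is that any vertex with exactly one neighbour outside $N(v)$ has at least $\delta-1$ neighbours inside $N(v)$ and so is adjacent to one of the two lowest-degree neighbours $u_1,u_2$ of $v$, whose degrees sum to at most $\delta+m$ by vertex-criticality and the case hypothesis. In the second case, ordered sets starting at one of the $\delta-1$ highest-degree vertices have at most $(n-m-1)^{\underline{\delta}}$ extensions, which is small enough to absorb the slack elsewhere. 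Setting $m=n/2$, both cases reduce to explicit polynomial inequalities that hold for $n \geq 3.2\delta$ (and for $n \geq 8$ when $\delta=3$).
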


\begin{remark}
The constant 3.2 has not been optimized here, but rather chosen for convenience.
\end{remark}

\begin{proof}
By Lemma \ref{reduction to fixed size} it is enough to consider $t=\delta+1$. The argument breaks into two cases, depending on whether $G$ has at most $\delta-2$ vertices with degree larger than $m$ (a parameter to be specified later), or at least $\delta-1$. The intuition is that in the former case, after an initial vertex $v$ has been chosen for an ordered independent set, many choices for the second vertex should have at least two neighbors outside of $N(v)$, which reduces subsequent options, whereas in the latter case, an initial choice of one of the at least $\delta-1$ vertices with large degree should lead to few ordered independent sets.

First suppose that $G$ has at most $\delta-2$ vertices with degree larger than $m$. Just as in (\ref{eq-hcount}), a simple upper bound on the number of ordered independent sets of size $t$ whose first vertex is in $V_{>\delta}$ is
\begin{equation} \label{eq-hcount2}
\frac{h}{n} \left(n(n-(\delta+2))(n-(\delta+3)) \cdots (n-(2\delta+1))   \right) < \frac{h}{n} (n-\delta)^{\underline{\delta+1}}.
\end{equation}
There are $\ell$ choices for the first vertex $v$ of an ordered independent set that begins with a vertex from $V_{=\delta}$. For each such $v$, we consider the number of extensions to an ordered independent set of size $\delta+1$. This is at most
\begin{equation} \label{second-vertex}
x(n-(\delta+2))^{\underline{\delta-1}} + y(n-(\delta+3))^{\underline{\delta-1}} +  z(n-(\delta+4))^{\underline{\delta-1}}
\end{equation}
where $x$ is the number of vertices in $V(G)\setminus (\{v\}\cup N(v))$ that have no neighbors outside $N(v)$, $y$ is the number with one neighbor outside $N(v)$, and $z$ is the number with at least $2$ neighbors outside $N(v)$. Note that $x+y+z=n-\delta-1$, and that by vertex-criticality $x \leq \delta-1$.

Let $u_1$ and $u_2$ be the two lowest degree neighbors of $v$. By vertex-criticality and our assumption on the number of vertices with degree greater than $m$, the sum of the degrees of $u_1$ and $u_2$ is at most $\delta+m$. Each vertex counted by $y$ is adjacent to either $u_1$ or $u_2$, so counting edges out of $u_1$ and $u_2$ there are at most $m+\delta -2x - 2$ such vertices.

For fixed $x$ we obtain an upper bound on (\ref{second-vertex}) by taking $y$ as large as possible, so we should take $y=m+\delta -2x - 2$ and $z=n-m-2\delta+x+1$. With these choices of $y$ and $z$, a little calculus shows us that we obtain an upper bound by taking $x$ as large as possible, that is, $x=\delta-1$. This leads to an upper bound on the number of ordered independent sets of size $t$ whose first vertex is in $V_{=\delta}$ of
$$
\ell\left(
\begin{array}{l}
(\delta-1)(n-(\delta+2))^{\underline{\delta-1}} + \\ (m-\delta)(n-(\delta+3))^{\underline{\delta-1}} + \\ (n-m-\delta)(n-(\delta+4))^{\underline{\delta-1}}
\end{array}
\right).
$$
Combining with (\ref{eq-hcount2}) we see that are done (for the case $G$ has at most $\delta-2$ vertices with degree larger than $m$) as long as we can show that the expression above is strictly less than $\ell(n-\delta)^{\underline{\delta+1}}/n$, or equivalently that
\begin{equation} \label{analysis1}
n\left(
\begin{array}{l}
(\delta-1)(n-(\delta+2))(n-(\delta+3)) + \\
(m-\delta)(n-(\delta+3))(n-(2\delta+1)) + \\ (n-m-\delta)(n-(2\delta+1))(n-(2\delta+2))
\end{array}
\right)
< \begin{array}{l}
(n-\delta)^{\underline{4}}
\end{array}.
\end{equation}

We will return to this presently; but first we consider the case where $G$ has at least $\delta-1$ vertices with degree larger than $m$. An ordered independent set of size $\delta+1$ in this case either begins with one of $\delta-1$ vertices of largest degree, in which case there are strictly fewer than $(n-m-1)^{\underline{\delta}}$ extensions, or it begins with one of the remaining $n-\delta+1$ vertices. For each such vertex $v$ in this second case, the second vertex chosen is either one of the $k=k(v) \leq \delta-1$ vertices that have the same neighborhood as $v$, in which case there are at most $(n-(\delta+2))^{\underline{\delta-1}}$ extensions, or it is one of the $n-d(v)-1-k$ vertices that have a neighbor that is not a neighbor of $v$, in which case there are at most $(n-(\delta+3))^{\underline{\delta-1}}$ extensions. We get an upper bound on the total number of extensions in this second case (starting with a vertex not among the $\delta -1$ of largest degree) by taking $k$ as large as possible and $d(v)$ as small as possible; this leads to a strict upper bound on the number of ordered independent sets of size $\delta+1$ in the case $G$ has at least $\delta-1$ vertices with degree larger than $m$ of
$$
(\delta-1)(n-m-1)^{\underline{\delta}} + (n-\delta+1)\left(
\begin{array}{l}
(\delta-1)(n-(\delta+2))^{\underline{\delta-1}} + \\
(n-2\delta)(n-(\delta+3))^{\underline{\delta-1}}
\end{array}
\right).
$$
We wish to show that this is at most $(n-\delta)^{\underline{\delta+1}}$. As long as $m \geq \delta$ we have $n-m-i \leq n-\delta -i$, and so what we want is implied by
\begin{equation} \label{analysis2}
\left(
\begin{array}{l}
(\delta-1)(n-m-1)(n-m-2)+\\
(n-\delta+1)(\delta-1)(n-(\delta+2))+\\
(n-\delta+1)(n-2\delta)(n-(2\delta+1))
\end{array}
\right)
\leq (n-\delta)^{\underline{3}}.
\end{equation}

Setting $m=n/2$, we find that for $\delta \geq 3$, both (\ref{analysis1}) and (\ref{analysis2}) hold for all $n \geq 3.2\delta$. Indeed, in both cases at $n=3.2 \delta$ the right-hand side minus the left-hand side is a polynomial in $\delta$ (a quartic in the first case and a cubic in the second) that is easily seen to be positive for all $\delta \geq 3$; and in both cases we can check that for each fixed $\delta \geq 3$, when viewed as a function of $n$ the right-hand side minus the left-hand side has positive derivative for all $n \geq 3.2\delta$. This completes the proof of the first statement. It is an easy check that both (\ref{analysis1}) and (\ref{analysis2}) hold for all $n \geq 8$ in the case $\delta =3$, completing the proof of the lemma.
\end{proof}

\section{Proof of Theorem \ref{thm-main}, part \ref{item-delta=3} ($\delta=3$)} \label{sec-delta=3}

Recall that we are trying to show that for $\delta =3$, $t \geq 3$ and $G \in \calG(n,3)$, we have $i_t(G) \leq i_t(K_{3,n-3})$, and that for $n \geq 6$ and $t=3$ we have equality iff $G=K_{3,n-3}$, while for $n \geq 7$ and $4 \leq t \leq n-3$ we have equality iff $G$ is obtained from $K_{3,n-3}$ by adding some edges inside the partite set of size $3$.

For $t=4$ and $n \geq 7$ we prove the result (including the characterization of uniqueness) by induction on $n$, with the base case $n=7$ trivial. For $n \geq 8$, Lemma \ref{lem-largedeltastrong} gives strict inequality for all vertex-critical $G$, so we may assume that we are working with a $G$ which is non-vertex-critical. Lemma \ref{lem-good-vertex-for-deletion} now gives the inequality $i_4(G) \leq i_4(K_{3,n-3})$, and the characterization of cases of inequality goes through exactly as it did for Theorem \ref{thm-main} parts \ref{item-delta=2} and \ref{item-largedeltaweak}. The result for larger $t$ (including the characterization of uniqueness) now follows from Lemma \ref{reduction to fixed size}.

For $t=3$, we also argue by induction on $n$, with the base case $n=6$ trivial. For $n \geq 7$, if $G$ is not vertex-critical then the inequality $i_3(G) \leq i_3(K_{3,n-3})$ follows from Lemma \ref{lem-good-vertex-for-deletion}, and the fact that there is equality in this case only for $G=K_{3,n-3}$ follows exactly as it did in the proofs of Theorem \ref{thm-main} parts \ref{item-delta=2} and \ref{item-largedeltaweak}. So we may assume that $G$ is vertex-critical. We will also assume that $G$ is edge-critical (this assumption is justified because in what follows we will show $i_3(G)<i_3(K_{3,n-3})$, and restoring the edges removed to achieve edge-criticality maintains the strictness of the inequality). Our study of critical $3$-regular graphs will be based on a case analysis that adds ever more structure to the $G$ under consideration. A useful preliminary observation is the following.
\begin{lemma} \label{lem-no_n-3}
Fix $\delta=3$. If a critical graph $G$ has a vertex $w$ of degree $n-3$ or greater, then $i_3(G) < i_3(K_{3,n-3})$.
\end{lemma}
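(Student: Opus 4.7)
The plan is to classify independent triples in $G$ according to whether they contain $w$. With $S := V(G)\setminus(\{w\}\cup N(w))$, we have $|S|\le 2$, so
\[
i_3(G)=i_3(G-w)+i_2(G[S]),
\]
where $i_2(G[S])\le 1$ (with equality exactly when $|S|=2$ and the two vertices of $S$ are non-adjacent). Thus the heart of the argument is a bound on $i_3(H)$ with $H:=G-w$, which I would obtain by reducing to the $\delta=2$ result already proved.

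The first key step is to show that $H\in\calG(n-1,2)$. For $n\ge 7$ we have $d(w)\ge n-3\ge 4$, so after deleting any edge $\{w,v\}$ the vertex $w$ still has degree at least $3$ and every vertex other than $v$ is untouched; edge-criticality then forces $v$ itself to drop to degree $2$, giving $d_G(v)=3$ and $d_H(v)=2$. Since vertices of $S$ retain their $G$-degree (which is at least $3$) in $H$, $H$ has minimum degree $2$. Applying Theorem \ref{thm-main} part \ref{item-delta=2} to $H$ (valid since $n-1\ge 6$) gives $i_3(H)\le\binom{n-3}{3}$, and combined with the decomposition this yields $i_3(G)\le\binom{n-3}{3}+1=i_3(K_{3,n-3})$.

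The remaining step, which I expect to be the main obstacle, is to upgrade this to strict inequality using the full criticality hypothesis. If equality held, then $i_3(H)=\binom{n-3}{3}$ and $|S|=2$ with the two vertices $u_1,u_2$ of $S$ non-adjacent. The equality characterization in Theorem \ref{thm-main} part \ref{item-delta=2} would pin $H$ down to be $K_{2,n-3}$ or $K'_{2,n-3}$. In the second case, $u_1$ and $u_2$---being the only vertices of $H$-degree exceeding $2$---must form the size-$2$ partite set and are therefore joined by an edge, contradicting $u_1\nsim u_2$. In the first case, restoring $w$ (adjacent to $N(w)$) yields $G=K_{3,n-3}$, which is not vertex-critical: deleting any vertex of the size-$(n-3)$ partite set leaves minimum degree $3$ (the size-$3$ vertices drop to degree $n-4\ge 3$, and the remaining size-$(n-3)$ vertices still have degree $3$). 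This contradicts criticality of $G$, completing the proof. The delicate point is the translation between the equality cases of the $\delta=2$ theorem and the structure of $G$; once that is in hand, vertex-criticality of $G$ rules out the unique near-extremal configuration.
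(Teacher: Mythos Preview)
Your proposal is correct and follows essentially the same approach as the paper: decompose $i_3(G)$ into independent sets containing $w$ and those not, bound the latter via the $\delta=2$ result applied to $G-w$, and eliminate the equality case by invoking the uniqueness characterization together with criticality. The only cosmetic difference is that the paper identifies $N(w)$ with the size-$(n-3)$ partite set by arguing that those vertices need $w$ as a neighbor to reach degree $3$, whereas you identify $\{u_1,u_2\}$ with the size-$2$ partite set by noting they are the only vertices of $H$-degree exceeding $2$; these are equivalent observations.
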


\begin{proof}
If $d(w)> n-3$ then there are no independent sets of size $3$ containing $w$, and by Theorem \ref{thm-main} part \ref{item-delta=2} the number of independent sets of size $3$ in $G-w$ (a graph of minimum degree $2$) is at most $\binom{n-3}{3} < i_3(K_{3,n-3})$. If $d(w)=n-3$ and the two non-neighbors of $w$ are adjacent, then we get the same bound. If they are not adjacent (so there is one independent set of size $3$ containing $w$) and $G-w$ is not extremal among minimum degree $2$ graphs for the count of independent sets of size $3$, then we also get the same bound, since now $i_3(G-w) \leq \binom{n-3}{3}-1$. If $G-w$ is extremal it is either $K_{2,n-3}$ or $K'_{2,n-3}$, and in either case $w$ must be adjacent to everything in the partite set of size $n-3$ (to ensure that $G$ has minimum degree $3$), and then, since the non-neighbors of $w$ are non-adjacent, it must be that $G=K_{3,n-3}$, a contradiction since we are assuming that $G$ is critical.
\end{proof}

\subsection{Regular $G$} \label{subsec1}

If $G$ is $3$-regular then we have $i_3(G) < \binom{n-3}{3}+1$. We see this by considering ordered independent sets of size $3$. Given an initial vertex $v$, we extend to an ordered independent set of size $3$ by adding ordered non-edges from $V \setminus (N(v) \cup \{v\})$. Since $G$ is 3-regular there are $3n$ ordered edges in total, with at most $18$ of them adjacent either to $v$ or to something in $N(v)$. This means that the number of ordered independent sets of size 3 in $G$ is at most
$$
n((n-4)(n-5) - (3n-18)) < (n-3)(n-4)(n-5)+6
$$
with the inequality valid as long as $n \geq 7$. So from here on we may assume that $G$ is not $3$-regular, or equivalently that $V_{>3} \neq \emptyset$.

\begin{remark}
The argument above generalizes to show that $\delta$-regular graphs have at most $\binom{n-\delta}{3} + \binom{\delta}{3}$ independent sets of size $3$, with equality only possible when $n=2\delta$.
\end{remark}

Let $v \in V(G)$ have a neighbor in $V_{>\delta}$. By criticality $d(v)=3$. Let $w_1$, $w_2$, and $w_3$ be the neighbors of $v$, listed in decreasing order of degree, so $d(w_1)=d$, $d(w_2)=x$ and $d(w_3)=3$ satisfy $3 \leq x \leq d \leq n-4$, the last inequality by Lemma \ref{lem-no_n-3} as well as $d > 3$ (see Figure \ref{fig:generic}).
\begin{figure}[ht!]
\begin{center}
\begin{tikzpicture}[scale=.8]
	\node (v1) at (1,5) [circle,draw,label=180:$v$] {};
	\node (v2) at (4,7) [circle,draw,label=right:$w_1$ with degree $d>3$] {};
	\node (v3) at (4,5) [circle,draw,label=right:$w_2$ with degree $3 \leq x \leq d$] {};
	\node (v4) at (4,3) [circle,draw,label=right:$w_3$ with degree $3$] {};
    \foreach \from/\to in {v1/v2,v1/v3,v1/v4}
	\draw (\from) -- (\to);
\end{tikzpicture}
\caption{The generic situation from the end of Section \ref{subsec1} on.\label{fig:generic}}
\end{center}
\end{figure}
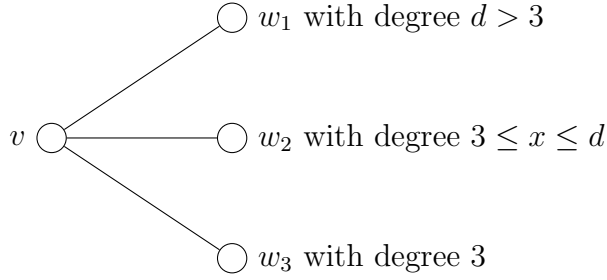

\subsection{No edge between $w_3$ and $w_2$} \label{subsec2}

We now precede by a case analysis that depends on the value of $x$ as well as on the set of edges present among the $w_i$'s. The first case we consider is $w_3 \nsim w_2$. In this case
we give upper bounds on the number of independent sets of size $3$ which contain $v$ and the number which do not.  There are $\binom{n-4}{2} - |E(Y)|$
independent sets of size 3 which include $v$, where $Y=V \setminus (N(v) \cup \{v\})$.  We lower bound $|E(Y)|$ by lower bounding the sum of the degrees in $Y$ and then subtracting off the number of edges from $Y$ to $\{v\} \cup N(v)$. This gives
\begin{equation} \label{case1a}
|E(Y)| \geq \frac{3(n-4)-2-(d-1)-(x-1)}{2} = \frac{3(n-4) - x - d}{2}.
\end{equation}
To bound the number of independent sets of size $3$ which don't include $v$, we begin by forming $G'$ from $G$ by deleting $v$ and (to restore minimum degree $3$) adding an edge between $w_3$ and $w_2$ (we will later account for independent sets that contain both $w_2$ and $w_3$). The number of independent sets of size $3$ in $G'$ is, by induction, at most $i_3(K_{3,n-4})$. But in fact, we may assume that the count is strictly smaller than this. To see this, note that if we get exactly $i_3(K_{3,n-4})$ then by induction $G' = K_{3,n-4}$. For $n=7$ this forces $G$ to have a vertex of degree $4$ and so $i_3(G)<i_3(K_{3,4})$ by Lemma \ref{lem-no_n-3}. For $n > 7$, $w_3$ must be in the partite set of size $n-4$ in $G'$ (to have degree $3$) so since $w_2 \sim w_3$ (in $G'$), $w_2$ must be in the partite set of size $3$. To avoid creating a vertex of degree $n-3$ in $G$, $w_1$ must be in the partite set of size $n-4$. But then all other vertices in the partite set of size $n-4$ only have neighbors of degree $n-4$ (in $G$), contradicting criticality.

So we may now assume that the number of independent sets of size $3$ in $G$ which do not include $v$ is at most
\begin{equation} \label{case1b}
\binom{n-4}{3} + (n-x-2),
\end{equation}
the extra $n-x-2$ being an upper bound on the number of independent sets of size $3$ that include both $w_3$ and $w_2$. Combining (\ref{case1a}) and (\ref{case1b}) we find that in this case
\begin{equation} \label{case1}
i_3(G)  \leq \binom{n-4}{2} - \frac{3(n-4) - x - d}{2} + \binom{n-4}{3} + (n-x-2).
\end{equation}
As long as $d < n+x-6$ this is strictly smaller that $i_3(K_{3,n-3})$. Since $x \geq 3$ and $d < n-3$, this completes the case $w_3 \nsim w_2$.

\subsection{Edge between $w_3$ and $w_2$, no edge between $w_3$ and $w_1$, degree of $w_2$ large} \label{subsec3}

The next case we consider is $w_3 \sim w_2$, $w_3 \nsim w_1$, and $x > 3$. In this case we can run an almost identical the argument to that of Section \ref{subsec2}, this time adding the edge from $w_1$ to $w_3$ when counting the number of independent sets of size $3$ that don't include $v$. We add $1$ to the right-hand side of (\ref{case1a}) (to account for the fact that there is now only one edge from $w_3$ to $Y$ instead of $2$, and only $x-2$ from $w_2$ to $Y$ instead of $x-1$) and replace (\ref{case1b}) with $\binom{n-4}{3} +1 + (n-d-2)$ (the $1$ since in this case we do not need strict inequality in the induction step). Upper bounding $-d$ in this latter expression by $-x$, we get the same inequality as (\ref{case1}).

\subsection{Edge between $w_3$ and $w_2$, edge between $w_3$ and $w_1$, degree of $w_2$ large} \label{subsec4}

Next we consider the case $w_3 \sim w_2$, $w_3 \sim w_1$, and $x > 3$. Here we must have $w_1 \nsim w_2$, since otherwise $G$ would not be edge-critical. The situation is illustrated in Figure \ref{fig:squarecase}.
\begin{figure}[ht!]
\begin{center}
\begin{tikzpicture}[scale=.8]
	\node (v1) at (1,1) [circle,draw,label=215:$w_3$] {};
	\node (v2) at (4,1) [circle,draw,label=-45:$w_2$] {};
	\node (v3) at (1,4) [circle,draw,label=135:$v$] {};
	\node (v4) at (4,4) [circle,draw,label=45:$w_1$] {};
	\foreach \from/\to in {v1/v2,v1/v3,v1/v4,v2/v3,v3/v4}
	\draw (\from) -- (\to);
	\draw (v2) -- (4.75,1) [dashed];
	\draw (v2) -- (4.75,1.5) [dashed];
	\draw (v4) -- (4.75,4) [dashed];
	\draw (v4) -- (4.75,3.5) [dashed];
\end{tikzpicture}
\caption{The situation in Section \ref{subsec4}.\label{fig:squarecase}}
\end{center}
\end{figure}
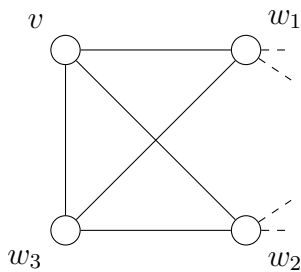
To bound $i_3(G)$, we consider $v$ and $w_3$. Arguing as in Section \ref{subsec2} (around (\ref{case1a})), the number of independent sets including one of $v$, $w_3$ is at most
\[
2\left(\binom{n-4}{2} - \frac{3(n-4) - (d-2) - (x-2)}{2}\right)
\]
To obtain an upper bound on the number of independent sets including neither $v$ nor $w_3$, we delete both vertices, add an edge from $w_1$ to $w_2$ (to restore minimum degree $3$) and use induction to get a bound of
\[
\binom{n-5}{3} + 1 + (n-d-2)
\]
(where the $n-d-2$ bounds the number of independent sets containing both $w_1$ and $w_2$). Since $x \leq n-2$ the sum of these two bound is strictly smaller than $i_3(K_{3,n-3})$.

\subsection{None of the above} \label{subsec5}

If there is no $v$ of degree $3$ that puts us into one of the previous cases, then every $v$ of degree $3$ that has a neighbor $w_1$ of degree strictly greater than $3$ may be assumed to have two others of degree $3$, $w_2$ and $w_3$ say, with $vw_2w_3$ a triangle (see Figure \ref{fig:finalcase}).
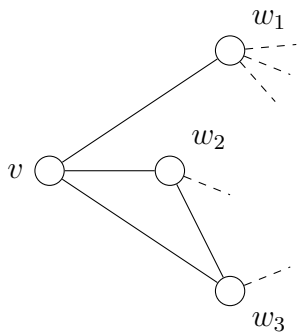
\begin{figure}[ht!]
\begin{center}
\begin{tikzpicture}[scale=.8]
	\node (v1) at (1,5) [circle,draw,label=180:$v$] {};
	\node (v2) at (4,7) [circle,draw,label=45:$w_1$] {};
	\node (v3) at (3,5) [circle,draw,label=45:$w_2$] {};
	\node (v4) at (4,3) [circle,draw,label=-45:$w_3$] {};
	\foreach \from/\to in {v1/v2,v1/v3,v1/v4,v3/v4}
	\draw (\from) -- (\to);
	\draw (v3) -- (4,4.6) [dashed];
	\draw (v4) -- (5,3.4) [dashed];
	\draw (v2) -- (5.1,7.1) [dashed];
	\draw (v2) -- (5,6.6) [dashed];
	\draw (v2) -- (4.75,6.15) [dashed];
\end{tikzpicture}
\caption{The situation in Section \ref{subsec5}.\label{fig:finalcase}}
\end{center}
\end{figure}

Since every neighbor of a vertex of degree greater than $3$ has degree exactly $3$ (by criticality) it follows that for every $w_1$ of degree greater than $3$, every neighbor of $w_1$ is a vertex of a triangle all of whose vertices have degree $3$. We claim that two of these triangles must be vertex disjoint. Indeed, if $w_1$ has two neighbors $a$ and $b$ with $a\sim b$ then the triangles associated with $a$ and $b$ must be the same, and by considering degrees we see that the triangle associated with any other neighbor of $w_1$ must be vertex disjoint from it. If $a$ and $b$ are not adjacent and their associated triangles have no vertex in common, then we are done; but if they have a vertex in common then (again by considering degrees) they must have two vertices in common, and the triangle associated with any other neighbor of $w_1$ must be vertex disjoint from both.

By suitable relabeling, we may therefore assume that $G$ has distinct vertices $w_1$ (of degree greater than $3$) and $x, y_2, y_3, v, w_2$ and $w_3$ (all of degree $3$), with $x$ and $v$ adjacent to $w_1$, and with $xy_2y_3$ and $vw_2w_3$ forming triangles (see Figure \ref{fig:finalG}). By considering degrees, we may also assume that the $w_i$'s and $y_i$'s are ordered so that $w_i \nsim y_i$ for $i=1,2$.
\begin{figure}[ht!]
\begin{center}
\begin{tikzpicture}[scale=.8]
	\node (v1) at (1,5) [circle,draw,label=180:$v$] {};
	\node (v2) at (6,8) [circle,draw,label=45:$w_1$] {};
	\node (v3) at (3,5) [circle,draw,label=45:$w_2$] {};
	\node (v4) at (4,2) [circle,draw,label=-45:$w_3$] {};
	\node (w1) at (11,5) [circle,draw,label=0:$x$] {};
	\node (w3) at (9,5) [circle,draw,label=135:$y_2$] {};
	\node (w4) at (8,2) [circle,draw,label=-135:$y_3$] {};
	\foreach \from/\to in {v1/v2,v1/v3,v1/v4,v3/v4,w1/v2,w1/w3,w1/w4,w3/w4}
	\draw (\from) -- (\to);
	\draw (v3) -- (4,4.6) [dashed];
	\draw (v4) -- (5,2.4) [dashed];
	\draw (w3) -- (8,4.6) [dashed];
	\draw (w4) -- (7,2.4) [dashed];
\end{tikzpicture}
\caption{The forced structure in Section \ref{subsec5}, before modification.\label{fig:finalG}}
\end{center}
\end{figure}
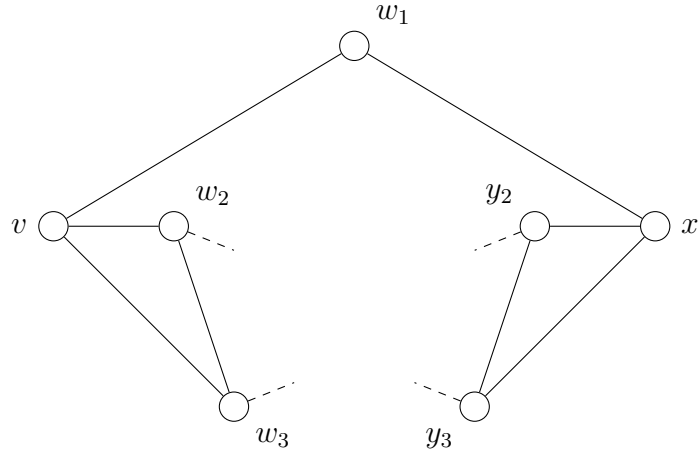

From $G$ we create $G'$ by removing the edges $w_2w_3$ and $y_2y_3$, and adding the edges $w_2y_2$ and $w_3y_3$ (see Figure \ref{fig:finalGprime}). We will argue that $i_3(G) \leq i_3(G')$; but then by the argument of Section \ref{subsec2} we have $i_3(G') < i_3(K_{3,n-3})$, and the proof will be complete.
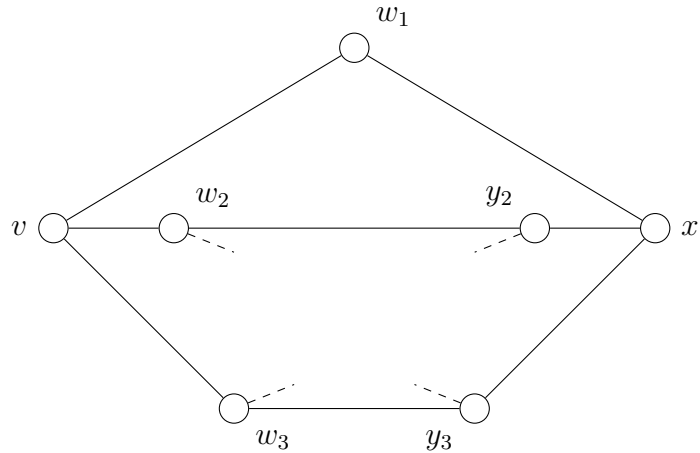
\begin{figure}[ht!]
\begin{center}
\begin{tikzpicture}[scale=.8]
	\node (v1) at (1,5) [circle,draw,label=180:$v$] {};
	\node (v2) at (6,8) [circle,draw,label=45:$w_1$] {};
	\node (v3) at (3,5) [circle,draw,label=45:$w_2$] {};
	\node (v4) at (4,2) [circle,draw,label=-45:$w_3$] {};
	\node (w1) at (11,5) [circle,draw,label=0:$x$] {};
	\node (w3) at (9,5) [circle,draw,label=135:$y_2$] {};
	\node (w4) at (8,2) [circle,draw,label=-135:$y_3$] {};
	\foreach \from/\to in {v1/v2,v1/v3,v1/v4,w1/v2,w1/w3,w1/w4,v3/w3,v4/w4}
	\draw (\from) -- (\to);
	\draw (v3) -- (4,4.6) [dashed];
	\draw (v4) -- (5,2.4) [dashed];
	\draw (w3) -- (8,4.6) [dashed];
	\draw (w4) -- (7,2.4) [dashed];
\end{tikzpicture}
\caption{The forced structure in Section \ref{subsec5}, after modification (i.e. in $G'$).\label{fig:finalGprime}}
\end{center}
\end{figure}

Independent sets of size $3$ in $G$ partition into $I_{w_2y_2}$ (those containing both $w_2$ and $y_2$, and so neither of $y_3$, $w_3$), $I_{w_3y_3}$ (containing both $w_3$ and $y_3$), and $I_{\rm rest}$, the rest. Independent sets of size $3$ in $G'$ partition into $I'_{w_2w_3}$, $I'_{y_2y_3}$, and $I'_{\rm rest}$. We have $|I_{\rm rest}|=|I'_{\rm rest}|$ (in fact $I_{\rm rest}=I'_{\rm rest}$). We will show $i_3(G) \leq i_3(G')$ by exhibiting an injection from $I_{w_2y_2}$ into $I'_{w_2w_3}$ and one from $I_{w_3y_3}$ into $I'_{y_2y_3}$.

If it happens that for every independent set $\{w_2, y_2, a\}$ in $G'$, the set $\{w_2, w_3, a\}$ is also an independent set, then we have a simple injection from $I_{w_2y_2}$ into $I'_{w_2w_3}$. There is only one way it can happen that $\{w_2, y_2, a'\}$ is an independent set but $\{w_2, w_3, a'\}$ is not; this is when $a'$ is the neighbor of $w_3$ that is not $v$ or $w_2$. If $\{w_2, y_2, a'\}$ is indeed an independent set in case, then letting $b'$ be the neighbor of $y_2$ that is not $x$ or $y_3$, we find that $\{w_2, w_3, b'\}$ is an independent set in $G'$, but $\{w_2, y_2, b'\}$ is not. So in this case we get an injection from $I_{w_2y_2}$ into $I'_{w_2w_3}$ by sending $\{w_2, y_2, a\}$ to $\{w_2, w_3, a\}$ for all $a \neq a'$, and sending $\{w_2, y_2, a'\}$ to $\{w_2, w_3, b'\}$. The injection from $I_{w_3y_3}$ into $I'_{y_2y_3}$ is almost identical and we omit the details.

\section{Concluding remarks} \label{sec-conclusion}

There now seems to be ample evidence to extend Conjecture \ref{conj-levelsetsweak} as follows.
\begin{conjecture} \label{conj-levelsetsstrong}
For each $\delta \geq 1$, $n \geq 2\delta$, $t \geq 3$ and $G \in \calG(n,\delta)$, we have $i_t(G) \leq i_t(K_{\delta,n-\delta})$.
\end{conjecture}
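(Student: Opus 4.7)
The plan is to close the remaining gap, namely the range $3 \leq t \leq 2\delta$ for $\delta \geq 4$, by building on the framework of this paper. A first observation is that Lemma \ref{lem-largedeltastrong} already handles $t = \delta + 1$ in the vertex-critical case for $n \geq 3.2\delta$, and induction on $n$ via Lemmas \ref{lem-good-vertex-for-deletion} and \ref{lem-deltacrit} extends this to all $G \in \calG(n,\delta)$; combined with Corollary \ref{reduction to fixed size-cor} this yields $P(n,\delta,t)$ for all $t \geq \delta + 1$, modulo finitely many small-$n$ exceptions for each $\delta$ that should be resolvable by direct analysis using a suitably generalized Lemma \ref{lem-no_n-3}. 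Thus the substantive remaining range is $3 \leq t \leq \delta$ for $\delta \geq 4$.

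For each such pair $(\delta, t)$ I would induct on $n$, using Lemmas \ref{lem-good-vertex-for-deletion} and \ref{lem-deltacrit} to restrict to critical graphs. The main work is then to bound $i_t(G)$ for critical $G \in \calG(n,\delta)$, and here I would follow the strategy of Section \ref{sec-delta=3}: select a vertex $v \in V_{=\delta}$ whose neighborhood meets $V_{>\delta}$ (if none exists then $G$ is $\delta$-regular and a direct ordered-independent-sets count closes the gap), and case-analyze the induced subgraph on $\{v\} \cup N(v)$ by the edge pattern among $N(v)$. Edge-criticality forces at most one vertex of $N(v)$ to lie in $V_{>\delta}$ and vertex-criticality further constrains the low-degree neighbors, so the number of cases should be manageable. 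In each case one either derives a direct bound by splitting independent $t$-sets according to whether they contain $v$ and applying the induction hypothesis to $G - v$ with an appropriate edge added to restore min-degree (as in Sections \ref{subsec2}--\ref{subsec4}), or performs a local edge swap of the $w_2 w_3, y_2 y_3 \mapsto w_2 y_2, w_3 y_3$ type (as in Section \ref{subsec5}) to reduce to an earlier case.

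The principal obstacle will be the case $t = 3$ for $\delta \geq 4$. Here the extremal count $\binom{n-\delta}{3} + \binom{\delta}{3}$ leaves very little slack, and, unlike the higher $t$ cases, no reduction analogous to Corollary \ref{reduction to fixed size-cor} lets us bootstrap from a previously established value of $t$. The structural case analysis also proliferates rapidly with $\delta$: it was already the most intricate part of this paper for $\delta = 3$, and for general $\delta$ it seems unreasonable to enumerate all configurations. A more promising route would be a uniform shifting lemma: show that any critical graph $G$ of minimum degree $\delta$ admits a sequence of local edge-swaps that each weakly increase $i_3$, preserve the minimum degree $\delta$, and terminate at a graph whose independent $3$-set count is bounded by $i_3(K_{\delta, n-\delta})$ directly. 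Identifying the right monovariant that strictly decreases along such swaps, and verifying that swaps never destroy either the critical structure or the minimum-degree constraint, is the hard part; a bipartite-shifting argument that eventually appeals to Theorem \ref{thm-ACM} would be especially attractive if one can find swaps that drive the graph toward bipartiteness without decreasing $i_3$.
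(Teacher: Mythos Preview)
This statement is Conjecture~\ref{conj-levelsetsstrong}: it is posed in the paper as an \emph{open} conjecture, and the paper gives no proof of it. So there is nothing to compare your attempt against; what you have written is not a proof but a research outline, and it should be read as such.

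As an outline it has real gaps. Your claim that ``edge-criticality forces at most one vertex of $N(v)$ to lie in $V_{>\delta}$'' is false: edge-criticality only says that $V_{>\delta}$ is an independent set, so every edge has a degree-$\delta$ endpoint, but a vertex $v\in V_{=\delta}$ can have up to $\delta-1$ neighbours in $V_{>\delta}$ (vertex-criticality guarantees only that at least one neighbour lies in $V_{=\delta}$). This matters, because your hope that ``the number of cases should be manageable'' for general $\delta$ rests on that false structural restriction; without it the local configurations around $v$ really do proliferate, which is exactly why the paper stops at $\delta=3$. Your first paragraph, extending Lemma~\ref{lem-largedeltastrong} to all $G$ and all $t\ge\delta+1$ via Lemmas~\ref{lem-good-vertex-for-deletion} and~\ref{lem-deltacrit}, is a sound reduction, but it still leaves the base range $2\delta\le n<3.2\delta$ to be checked for every $\delta$, and ``should be resolvable by direct analysis'' is not an argument. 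Most importantly, for the heart of the problem---the range $3\le t\le\delta$ and especially $t=3$---you yourself concede that the case analysis is infeasible and that the proposed edge-swap/shifting approach lacks both a candidate monovariant and any guarantee that swaps preserve criticality or minimum degree. That is precisely the missing idea; until it is supplied, what you have is a restatement of why the conjecture is hard, not a proof.
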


\medskip

Throughout we have considered $n \geq 2\delta$, that is, $\delta$ small compared to $n$. It is natural to ask what happens in the complementary range $\delta > n/2$. In the range $n \geq 2\delta$ we (conjecturally) maximize the count of independent sets by extracting as large an independent set as possible (one of size $n-\delta$). In the range $\delta > n/2$ this is still the largest independent set size, but now it is possible to have many disjoint independent sets of this size. The following conjecture seems quite reasonable.
\begin{conjecture} \label{conj-totalcount}
For $\delta \geq 1$, $n \geq \delta+1$ and $G \in \calG(n,\delta)$, we have $i(G) \leq i(K_{n-\delta,n-\delta, \ldots, n-\delta, x})$, where $0 \leq x < n-\delta$ satisfies $n \equiv x$ (mod $n-\delta$).
\end{conjecture}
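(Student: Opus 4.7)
My plan is to pass to the complement, which recasts Conjecture \ref{conj-totalcount} as a well-studied extremal clique-count problem. Set $H := \overline{G}$ and $\Delta := n-1-\delta$. Then $G \in \calG(n,\delta)$ is equivalent to $\Delta(H) \leq \Delta$, and independent sets in $G$ (including the empty set) correspond bijectively to cliques in $H$. Writing $n = q(\Delta+1) + x$ with $0 \leq x \leq \Delta$, the complement of the conjectured extremizer $K_{n-\delta, \ldots, n-\delta, x}$ is
\[
H^\ast := q\,K_{\Delta+1} \sqcup K_x,
\]
and a direct count gives $k(H^\ast) = q\cdot 2^{\Delta+1} + 2^x - q = i(K_{n-\delta, \ldots, n-\delta, x})$, where $k(\cdot)$ denotes total clique count. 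Conjecture \ref{conj-totalcount} is thus equivalent to the assertion that for every $n$-vertex graph $H$ with $\Delta(H) \leq \Delta$, the total clique count $k(H) := \sum_{t \geq 0} k_t(H)$ satisfies $k(H) \leq k(H^\ast)$.

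To prove this reformulated inequality I would use a two-step compression. Step one: reduce to the case in which every connected component of $H$ is a clique. If some component $C$ of $H$ is not a clique and $|C| \leq \Delta+1$, then any non-edge $\{u,v\}$ inside $C$ has both endpoints of degree at most $|C|-2 \leq \Delta-1$ (their neighbors all lie in $C$, and at least one vertex of $C$, namely the other endpoint, is a non-neighbor). Adding $uv$ therefore preserves $\Delta(H) \leq \Delta$ while strictly increasing the clique count (the new edge lies in at least one new clique and no old clique is destroyed). Iterating turns each such component into a clique. Step two: once $H$ is a disjoint union of cliques of sizes at most $\Delta+1$ summing to $n$, the maximization of $\sum_i 2^{|C_i|}$ is a one-line convexity argument. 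Transferring a vertex from a clique of size $b$ to a clique of size $a < \Delta+1$ (and completing the resulting larger component) is a legal move that changes $2^a + 2^b$ into $2^{a+1} + 2^{b-1}$, a strict gain whenever $a \geq b$. The maximizer is therefore precisely $H^\ast$.

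The main obstacle is extending step one to components of size exceeding $\Delta+1$. Such components may be $\Delta$-regular without being cliques (a long cycle when $\Delta=2$ is the prototype), and then no single edge addition preserves the degree bound, so the monotone edge-addition compression above fails. The remedy must be a more global move --- for instance, deleting an edge in the rigid component to free up degree capacity before inserting a new edge, or peeling off a $(\Delta+1)$-clique from the component and reshuffling --- and verifying that such a move strictly increases $k(H)$ requires some bookkeeping. In fact this reformulated clique-count inequality has been resolved by Cutler and Radcliffe, who proved the sharper pointwise bound $k_t(H) \leq k_t(H^\ast)$ for every $t$ by an induction on $n$ that peels off one suitably chosen component at a time; the cleanest route to Conjecture \ref{conj-totalcount} is therefore to invoke their theorem and sum over $t$, rather than to push the compression approach above through by hand.
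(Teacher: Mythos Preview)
The paper does not prove Conjecture~\ref{conj-totalcount}; it is offered in the concluding section as an open problem, with only the divisible case $x=0$ settled (via the trivial ordered-count bound~(\ref{easy-ub})). So there is no ``paper's own proof'' to compare against.

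Your complementation is exactly right: passing to $H=\overline{G}$ turns minimum degree $\geq \delta$ into maximum degree $\leq \Delta = n-1-\delta$, independent sets into cliques, and the conjectured extremizer $K_{n-\delta,\ldots,n-\delta,x}$ into $qK_{\Delta+1}\sqcup K_x$. The reformulated statement---that $qK_{\Delta+1}\sqcup K_x$ maximises the total clique count (indeed $k_t$ for every $t$) among $n$-vertex graphs with $\Delta(H)\leq \Delta$---is precisely the theorem of Cutler and Radcliffe (\emph{The maximum number of complete subgraphs in a graph with given maximum degree}, J.\ Combin.\ Theory Ser.~B, 2014), which appeared after the present paper. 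So your final sentence is the real proof: cite that result and sum over $t$. This does resolve the conjecture.

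Your own two-step compression, however, is not a proof as it stands, and you correctly flag the gap. Step one works only for components of size at most $\Delta+1$; for a $\Delta$-regular non-clique component (any cycle when $\Delta=2$, say) no single edge can be added, and the ``more global move'' you gesture at is genuinely where the difficulty lies. Cutler and Radcliffe's argument is not simply ``peel off a component and induct''---that would run into the same obstruction---but rather a careful induction on $n$ that removes a single vertex of maximum degree together with a well-chosen subset of its neighbourhood, controlling the clique counts lost at each $t$. If you want a self-contained argument rather than a citation, that is the idea you would need to reproduce.
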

\begin{question} \label{question-fixedsizecount}
For $\delta \geq 1$, $n \geq \delta+1$ and $t \geq 3$, which $G \in \calG(n,\delta)$ maximizes $i_t(G)$?
\end{question}

When $n-\delta$ divides $n$ (that is, $x=0$), both Conjecture \ref{conj-totalcount} and Question \ref{question-fixedsizecount} turn out to be easy; in this case (\ref{easy-ub}) gives that for all $1 \leq t \leq n-\delta$ and all $G \in \calG(n,\delta)$ we have $i_t(G) \leq i_t(K_{n-\delta,n-\delta, \ldots, n-\delta})$ and so also $i(G) \leq i(K_{n-\delta,n-\delta, \ldots, n-\delta})$ (the case $n=2\delta$ was observed in \cite{Cutler}). The problem seems considerably more delicate when $x \neq 0$. 

\medskip

Lemmas \ref{lem-good-vertex-for-deletion} and \ref{lem-deltacrit} allow us in the present paper to focus attention on the class of edge- and vertex-critical graphs. Lemma \ref{structural characterization} gives us a good understanding of critical graphs in the case $\delta=2$, and the bulk of Section \ref{sec-delta=3} concerns structural properties of critical graphs for $\delta = 3$. It is clear that approaching even the case $\delta =4$ by similar arguments would be considerable work. Any answer to the following question would help significantly.
\begin{question} \label{question-critical}
For $\delta \geq 4$, what can be said about the structure of edge- and vertex-critical graphs?
\end{question}

 \begin{bibsection}[References]
  \begin{biblist}

\bib{Cutler}{article}{
	author = {Alexander, J.},
	author = {Cutler, J.},
	author = {Mink, T.},
	title = {Independent sets in graphs with given minimum degree},
	journal = {manuscript}
}

\bib{Alon}{article}{
	author = {Alon, N.},
    title = {Independent sets in regular graphs and sum-free subsets
of finite groups},
	journal = {Israel Journal of Mathematics},
	volume = {73},
	year = {1991},
	pages = {247-256}
}

\bib{CarrollGalvinTetali}{article}{
	author = {Carroll, T.},
	author = {Galvin, D.},
	author = {Tetali, P.},
	title = {Matchings and independent sets of a fixed size in regular graphs},
	journal = {Journal of Combinatorial Theory Series A},
	volume = {116},
	year = {2009},
	pages = {1219-1227}
}

\bib{CutlerRadcliffe}{article}{
	author = {Cutler, J.},
	author = {Radcliffe, A. J.},
	title = {Extremal problems for independent set enumeration},
	journal = {Electronic Journal of Combinatorics},
	volume = {18(1)},
	year = {2011},
	pages = {\#P169}
}

\bib{CutlerRadcliffe2}{article}{
	author = {Cutler, J.},
    author = {Radcliffe, A. J.},
    title = {Extremal graphs for homomorphisms},
    journal = {Journal of Graph Theory},
	volume = {67},
	year = {2011},
	pages = {261-284}
}

\bib{GalvinTwoProblems}{article}{
	author = {Galvin, D.},
	title = {Two problems on independent sets in graphs},
	journal = {Discrete Mathematics},
	volume = {311},
	year = {2011},
	pages = {2105-2112}
}

\bib{HopkinsStaton}{article}{
	author = {Hopkins, G.},
    author = {Staton, W.},
    title = {Some identities arising from the Fibonacci numbers of certain graphs},
    journal = {Fibonacci Quarterly},
	volume = {22},
	year = {1984},
	pages = {255-258}
}

\bib{Hua}{article}{
	author = {Hua, H.},
	title = {A sharp upper bound for the number of stable sets in graphs with given number of cut edges},
	journal = {Applied Mathematics Letters},
	volume = {22},
	year = {2009},
	pages = {1380-1385}
}

\bib{KahnEntropy}{article}{
	author = {Kahn, J.},
	title = {An entropy approach to the hard-core model on bipartite graphs},
	journal = {Combinatorics, Probability \& Computing},
	year = {2001},
	volume = {10},
	pages = {219-237}
}

\bib{LinLin}{article}{
	author = {Lin, C.},
    author = {Lin, S.},
    title = {Trees and forests with large and small independent indices},
    journal = {Chinese Journal of Mathematics},
	year = {1995},
	volume = {23},
	pages = {199-210}
}

\bib{MerrifieldSimmons}{book}{
	author = {Merrifield, R.},
    author = {Simmons, H.},
    title = {Topological Methods in Chemistry},
    publisher = {Wiley},
    place = {New York},
    year = {1989}
}

\bib{PedersenVestergaard}{article}{
	author = {Pedersen, A.},
    author = {Vestergaard, P.},
    title = {Bounds on the number of vertex independent sets in a graph},
    journal = {Taiwanese Journal of Mathematics},
	year = {2006},
	volume = {10},
	pages = {1575-1587}
}

\bib{ProdingerTichy}{article}{
	author = {Prodinger, H.},
    author = {Tichy, R.},
    title = {Fibonacci numbers of graphs},
    journal = {The Fibonacci Quarterly},
	year = {1982},
	volume = {20},
	pages = {16-21}
}

\bib{Sapozhenko}{article}{
	author = {Sapozhenko, A.A.},
	title = {On the number of independent sets in bipartite graphs with large minimum degree},
	journal = {DIMACS Technical Report},
	year = {2000},
	pages = {no. 2000-25}
}

\bib{Wingard}{thesis}{
	author = {Wingard, G.},
    title = {Properties and applications of the Fibonacci polynomial of a graph},
    school = {Ph.D. thesis, University of Mississippi},
    year = {May 1995}
}

\bib{Zhao}{article}{
	author = {Zhao, Y.},
    title = {The Number of Independent Sets in a Regular Graph},
    journal = {Combinatorics, Probability \& Computing},
	year = {2010},
	volume = {19},
	pages = {315-320}
}

\bib{Zykov}{article}{
	author = {Zykov, A.A.},
    title = {On some properties of linear complexes},
    journal = {Mat. Sbornik N.S.},
	year = {1949},
	volume = {24(66)},
	pages = {163-188}
}

  \end{biblist}
\end{bibsection}

\end{document}